\documentclass[11pt]{amsart}
\usepackage[utf8]{inputenc}
\usepackage[T1]{fontenc}
\usepackage{graphicx}
\usepackage{longtable}
\usepackage{wrapfig}
\usepackage{rotating}
\usepackage[normalem]{ulem}
\usepackage{amsmath}
\usepackage{amssymb}
\usepackage{capt-of}
\usepackage{hyperref}
\subjclass[2020]{Primary 18G35; Secondary 19A99, 19D99, 40G05}
\keywords{Euler characteristic; unbounded chain complex; Cesàro summation method; Hölder summation method; Waldhausen \(K\)-theory; algebraic \(K\)-theory}
\author{Thomas Hüttemann}\address[T.~Hüttemann]{Queen's University Belfast, School of Mathematics and Physics, Mathematical Sciences Research Centre, Belfast BT7~1NN, UK}\email{t.huettemann@qub.ac.uk}
\author{Dan Kucerovsky}\address[D.~Kucerovsky]{University of New Brunswick, Department of Mathematics and Statistics, P.O.~Box 4400, Fredericton, NB, Canada, E3B~5A3}\email{dkucerov@unb.ca}\thanks{Work on this paper started during a research visit of the second-named author to Queen's University Belfast. Financial support of the Fredrik and Catherine Eaton Fund is gratefully acknowledged}
\usepackage{amsthm}
\usepackage{amssymb}
\usepackage{tikz-cd}
\usepackage{mathtools}
\newtheorem{lem}{Lemma}
\newtheorem{prop}[lem]{Proposition}
\newtheorem{thm}[lem]{Theorem}
\newtheorem{cor}[lem]{Corollary}
\theoremstyle{definition}
\newtheorem{dfn}[lem]{Definition}
\newtheorem{xmpl}[lem]{Example}
\newtheorem{rem}[lem]{Remark}
\numberwithin{lem}{section}
\numberwithin{equation}{lem}
\newcommand{\bN}{\mathbb{N}}
\newcommand{\bZ}{\mathbb{Z}}
\newcommand{\bQ}{\mathbb{Q}}
\newcommand{\bR}{\mathbb{R}}

\newcommand{\rcof}[1][]{\arrow[r, tail,   "#1"']}
\newcommand{\rcofp}[1][]{\arrow[r, tail,   "#1"]}
\newcommand{\rweq}[1][]{\arrow[r, "\sim", "#1"']}
\newcommand{\po}{\ar[dr,phantom,"{\tikz{\draw[-] (-0.3,0) -- (-0.3,0.3) -- (0,0.3); \path (-.15,.15) node[circle,fill=black,scale=0.2] {};}}"]}
\newcommand{\C}{\mathcal{C}_{\text{\upshape H}}(R)}
\newcommand{\Cp}{\mathcal{C}_{\text{pre}}(R)}
\usepackage{enumerate}
\date{\today}
\title{An Euler characteristic for unbounded chain complexes}
\hypersetup{
 pdfauthor={Thomas Hüttemann and Dan Kucerovsky},
 pdftitle={An Euler characteristic for unbounded chain complexes},
 pdfkeywords={},
 pdfsubject={},
 pdfcreator={Emacs 29.3 (Org mode 9.6.15)}, 
 pdflang={English}}

\begin{document}

\begin{abstract}
\noindent We propose a definition of an Euler characteristic for
unbounded chain complexes by taking the (usual) Euler characteristics
of successively longer parts of the complex, weighted inversely
proportional to the length, and passing to the limit. This amounts to
taking the limit of the sequence of ranks of homology modules with
alternating signs in the sense of the Hölder summation method. We
establish the structure of a category with cofibrations and weak
equivalences on unbounded complexes for which the infinite Euler
characteristic is defined, and show that its Grothendieck
group \(K_0\) is unusually large (/viz./, uncountable).
\end{abstract}

\maketitle
\allowdisplaybreaks \vspace{-1cm}
\newcommand{\cH}{\chi_{\text{\upshape H}}}

\tableofcontents

\section*{Introduction}
\label{sec:orgf062321}
The Euler characteristic of a bounded chain complex~\(C\) of abelian
groups with finitely generated homology groups is defined to be the
alternating sum
\begin{equation*}
  \chi(C) = \sum_{k=0}^n (-1)^k \mathrm{rank}\,
  H_k(C) \tag{\(\mathcal{E}\)}
\end{equation*} 
of the ranks of its homology groups, assuming the complex is
concentrated in chain levels 0 to~\(n\). It is clear that the
definition extends immediately to complexes that are homologically
bounded (rather than bounded), and since \(\chi\) is homotopy
invariant, one can extend the definition further to complexes that are
homotopy equivalent to a complex of the type above.

However, none of these definitions capture properties of properly
unbounded complexes which arise, for example, in topology as singular
chain complexes of non-compact spaces.

In this paper, we develop aspects of a new Euler characteristic,
called the "infinite Euler characteristic", of certain unbounded
complexes. The underlying idea is to re-scale the
expression~(\(\mathcal{E}\)) to incorporate information on the
"length"~\(n\) of the sum, effectively averaging the ranks, and then
taking the limit \(n \to \infty\). In other words, we consider the
"Cesàro limit" of the sequence \(\left( (-1)^k \mathrm{rank}\, H_k(C)
\right)_{k \geq 0}\). This already enlarges the class of allowable
chain complexes dramatically.

One can go on step further: Instead of using the Cesàro limit as
described, one can iterate the construction and repeat the averaging
process several times. This procedure has been used by Hölder
\cite{zbMATH02705122} and is known as the Hölder summability method.

We introduce a category of unbounded chain complexes for which the
infinite Euler characteristic is defined, and equip it with the
structure of a category with cofibrations and weak equivalences so
that its higher algebraic \(K\)-groups are defined. Our main result is
that the Grothendieck group~\(K_{0}\) is rather large, and in fact
uncountable. This is highly significant as uncountable groups can
harbour interesting geometrical and topological properties. The proof
makes use of the theory of the infinite Euler characteristic developed
in the early parts of this paper.

\subsection*{Related work}
\label{sec:org89e5121}
Large Grothendieck groups are not a new phenomenom. In 2004, Gubeladze
\cite{zbMATH02105767} showed that for each field \(K\) of
characteristic~0 having infinite dimension as a \(\bQ\)-vector space
there exist (simplicial, quasi-projective) toric varieties over~\(K\)
with Grothendieck group of algebraic vector bundles having rank at
leat \(\dim_{\bQ} K\). This is achieved by gathering information about
various \(K_1\)-groups, and using an exact sequence argument to
establish the claim for~\(K_{0}\). In contrast, the approach taken in
the present paper is somewhat more direct, based on the \emph{explicit}
construction of a surjection from the Grothendieck group to~\(\bR\).

Euler characteristics have been studied for various mathematical
structures, sometimes involving divergent series; see, for example,
\cite{zbMATH05229957} for the case of finite categories, and the
references therein. The present paper differs in two respects: The
Euler characteristic is defined as a generalised limit of the
\emph{sequence} of the ranks of the homology groups with alternating signs,
and the main focus lies on embedding the Euler characteristic into the
framework for higher algebraic \(K\)-theory of chain complexes.

\subsection*{Structure of the paper}
\label{sec:orga76f37d}
We begin the paper by introducing the Hölder limiting process
in~\S\ref{sec:Hölder limits}. In \S\ref{sec:characteristic} we define the new
Euler characteristic~\(\cH\) and prove that its range is the set of
real numbers. Relevant examples of unbounded chain complexes arise
from topology, as is outlined in \S\ref{sec:topology}. We proceed to embed
the material into the wider context of higher algebraic \(K\)-theory.
We establish additivity of~\(\cH\) in \S\ref{sec:additivity}, introduce
the structure of a category with cofibrations and weak equivalences on
a category (unbounded) chain complexes in \S\ref{sec:Waldhausen}, and show
that its Grothendieck group~\(K_{0}\) is uncountable. Next, we show in
\S\ref{sec:finite_model} that there is a smaller model for the resulting
\(K\)-theory using chain complexes of finitely generated modules, and
finally discuss the possibility of computing the value of the new
Euler characteristic directly from the chain modules in \S\ref{sec:compute}.

\section{Hölder limits}
\label{sec:orga63ca80}
\label{sec:Hölder limits} We denote the set of positive integers \(\{1,\,
2,\, 3,\, \cdots\}\) by~\(\bN\).  The \emph{Cesàro matrix} is the infinite
\(\bN\)-indexed square matrix defined by
\begin{equation*}
  M = (m_{i,j}) =
  \begin{pmatrix}
    1 & 0 & 0 & 0 & 0 & \ldots \\[0.3em]
    \frac12 & \frac12 & 0 & 0 & 0 & \ldots \\[0.3em]
    \frac13 & \frac13 & \frac13 & 0 & 0 & \ldots \\[0.3em]
    \frac14 & \frac14 & \frac14 & \frac14 & 0 & \ldots \\[0.3em]
    \vdots & \vdots & \vdots & \vdots & \ddots & \ddots
  \end{pmatrix}
  \ ,
\end{equation*}
that is, \(m_{i,j} = 1/i\) if \(i \geq j\), and \(m_{i,j} = 0\)
otherwise.

Let \(\mathbf{a} = (a_n)_{n \in \mathbb{N}} \in \mathbb{R}^\infty\) be
a sequence of real numbers. The sequence \(\mathbf{a}\) is
\emph{\(M^k\)-convergent with limit \(\ell \in \mathbb{R}\)} if the
sequence \(M^k \mathbf{a}\) converges to~\(\ell\). It is well known,
and an exercise in elementary analysis, that if \(\mathbf{a}\) is
\(M^k\)-convergent, then it is also \(M^m\)-convergent for every \(m
\geq k\), with the same limit. Thus it makes sense to define
\(\mathbf{a}\) to be \emph{Hölder convergent} or H-\emph{convergent} if the
sequence is \(M^k\)-convergent for some \(k > 0\), and define the
Hölder limit \(\mathrm{\relax H}\hbox{-}\lim(\mathbf{a})\) by \(\lim
M^k \mathbf{a}\) for sufficiently large \(k \gg 0\). If \(\mathbf{a}\)
is H-convergent with limit~0 we say that \(\mathbf{a}\) is an H-\emph{null}
sequence.

\begin{xmpl}
Suppose \(\mathbf{a}\) is the divergent sequence given by \(a_n =
(-1)^{n-1} \cdot \lceil \frac{n}{2} \rceil\), that is, the sequence
\(1,\,-1,\, 2,\,-2,\,3,\,-3,\, \cdots\). Then \(M\mathbf{a}\) is the
sequence \(1,\, 0,\, \frac23,\, 0,\, \frac35,\, 0,\, \frac47,\, 0,\,
\cdots\), that is, \((M\mathbf{a})_{2n-1} = n/(2n-1)\) and
\((M\mathbf{a})_{2n} = 0\). Thus \(M\mathbf{a}\) is divergent, with
accumulation points \(0\) and~\(1/2\). The sequence \(M^2\mathbf{a}\)
is convergent with limit~\(1/4\), which is the H-limit
of~\(\mathbf{a}\).
\end{xmpl}

Given any sequence \(\mathbf{a} = (a_n)\) we will write
\(\frac1n \mathbf{a}\) or \(\mathbf{a}/n\) for the sequence
\((a_n/n)_{n \in \bN}\). More generally, if \(f \colon \bN \to \bR\),
\(n \mapsto f(n)\) is a function we write \(\mathbf{a}/f\) for the
sequence \((a_n/f(n))_{n \in \bN}\).

\begin{dfn}
We write \(\mathbf{a} = o(f)\) if the sequence \(\mathbf{a}/f =
(a_n/f(n))_{n \in \bN}\) is a null sequence (in the usual sense), and
\(\mathbf{a} = \mathrm{\relax H}\hbox{-}o(f)\) if \(\mathbf{a}/f\) is
an H-null sequence.
\end{dfn}

\begin{lem}
\label{lem:H-conv-H-on}
If \(\mathbf{a}\) is an \(M^k\)-convergent sequence then \(a_n =
o(n^k)\), that is, \(\lim_{n \to \infty} a_n/n^k = 0\).
\end{lem}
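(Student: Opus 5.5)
We argue by induction on \(k\), inverting the Cesàro matrix. The basic observation is that \(M\) is invertible on sequences. If \(\mathbf{b} = M\mathbf{a}\), then \(n b_n = \sum_{j\le n} a_j\), and hence
\[
  a_n = n b_n - (n-1) b_{n-1}
\]
for \(n \geq 2\), with \(a_1 = b_1\). So the statement reduces to controlling how fast this difference can grow when \(\mathbf{b}\) is \(M^{k-1}\)-convergent.

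The base case is \(k=1\). Let \(\mathbf{b} = M\mathbf{a}\) converge to \(\ell\). Then
\[
  \frac{a_n}{n} = b_n - \frac{n-1}{n}\, b_{n-1} \longrightarrow \ell - \ell = 0,
\]
so \(a_n = o(n)\). (One could also include \(k=0\) as a base case: an ordinary convergent sequence is bounded, so \(a_n = o(n^0)\) fails in general. It is therefore cleaner to start the induction at \(k=1\), which is what the statement requires since \(k>0\).)

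For the inductive step, suppose the claim holds for \(k-1 \geq 1\), and let \(\mathbf{a}\) be \(M^k\)-convergent. Set \(\mathbf{b} = M\mathbf{a}\). Then \(M^{k-1}\mathbf{b} = M^k\mathbf{a}\) converges, so \(\mathbf{b}\) is \(M^{k-1}\)-convergent. The induction hypothesis gives \(b_n = o(n^{k-1})\), and consequently \(n b_n = o(n^k)\) and \((n-1)b_{n-1} = o(n^k)\). The inverse formula above then yields \(a_n = o(n^k)\), since a difference of two \(o(n^k)\) sequences is again \(o(n^k)\).

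The only point needing care is that the induction applies the hypothesis to \(\mathbf{b}\) rather than to \(\mathbf{a}\), which is justified because \(M^{k-1}(M\mathbf{a}) = M^k\mathbf{a}\) by associativity of the matrix product. This is immediate here because \(M\) is lower triangular, so all products involve only finite sums. I expect no real obstacle beyond getting the base case right, where the genuine limit information is used; the inductive step only transports \(o\)-estimates. The bound \(o(n^{k-1})\) for \(\mathbf{b}\) must be checked to be strong enough after multiplication by \(n\), and it is, since \(n \cdot o(n^{k-1}) = o(n^k)\).
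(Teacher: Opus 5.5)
Your proof is correct and follows essentially the same route as the paper: induction on \(k\), with the induction hypothesis applied to \(M\mathbf{a}\), and \(a_n\) recovered from consecutive terms of \(M\mathbf{a}\). Your inversion formula \(a_n = n b_n - (n-1) b_{n-1}\) is a tidier form of the paper's identity involving \(q = (n-1)^k - n^k\), and it makes the inductive step a one-line \(o\)-estimate.
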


\begin{proof}
We use induction on~\(k\). For \(k = 1\) we compute
\((M\mathbf{a})_n - (M\mathbf{a})_{n-1} = a_n/n -
(M\mathbf{a})_{n-1}/n\).  As \(M\mathbf{a}\) converges in this case,
\((M\mathbf{a})_n - (M\mathbf{a})_{n-1} \to 0\) and
\((M\mathbf{a})_{n-1}/n \to 0\) as \(n \to \infty\), thus \(a_n/n \to
0\) whence \(a_n = o(n)\).

For \(k > 1\) we observe \(M^k\mathbf{a} = M^{k-1} (M\mathbf{a})\) by
associativity. Thus if \(\mathbf{a}\) is \(M^k\)-convergent then
\(M\mathbf{a}\) is \(M^{k-1}\)-convergent, and by induction hypothesis
\((M\mathbf{a})_n = o(n^{k-1})\). We compute
\begin{align*}
  \frac{1}{n^{k-1}} (M\mathbf{a})_n - \frac{1}{(n-1)^{k-1}}
  (M\mathbf{a})_{n-1} \hspace{-4em} & \\
  & = \frac{1}{n^{k-1}} \sum_{1}^{n} \frac{a_j}{n}
  - \frac{1}{(n-1)^{k-1}} \sum_{1}^{n-1} \frac{a_j}{n-1} \\
  & = \frac{1}{n^k} \sum_1^n a_j - \frac{1}{(n-1)^k} \sum_1^{n-1} a_j \\
  & = \frac{a_n}{n^k} + \frac{(n-1)^k - n^k}{n^{k}(n-1)^k} \cdot
  \sum_1^{n-1} a_j \\
  & = \frac{a_n}{n^k} + \frac{q}{n^k} \cdot
  \frac{1}{(n-1)^{k-1}} (M\mathbf{a})_{n-1} \ ,
\end{align*}
where \(q = (n-1)^k - n^k\) is a polynomial of degree~\(k-1\)
in~\(n\). Since \((M\mathbf{a})_n = o(n^{k-1})\) by induction, the
very last summand vanishes in the limit \(n \to \infty\), and the
left-hand side also vanishes in the limit. Thus also \(a_n/n^k \to 0\)
whence \(a_n = o(n^k)\) as claimed.
\end{proof}

\begin{dfn}
The sequence \(\mathbf{a} = (a_n)_{n \in \bN}\) is called \emph{absolutely
H-convergent} if the sequence \(|\mathbf{a}| = (|a_n|)_{n \in \bN}\)
is H-convergent. We say that \(\mathbf{a}\) is \emph{absolutely H-null} if
\(\mathbf{a}\) is absolutely H-convergent with H-limit~0.
\end{dfn}

\begin{lem}
\label{H-o-n-order} Suppose that \(\mathbf{a}\) and \(\mathbf{b}\) are
sequences of non-negative numbers satisfying \(\mathbf{b} \leq
\mathbf{a}\) (termwise), and that \(f \colon \bN \to \bR\) is a
function taking positive values only. If \(\mathbf{a} = \mathrm{\relax
H}\hbox{-}o(f)\) then also \(\mathbf{b} = \mathrm{\relax
H}\hbox{-}o(f)\).
\end{lem}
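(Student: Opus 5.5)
The plan is to reduce the statement to the monotonicity and positivity of the Cesàro matrix~\(M\), followed by a squeeze argument. Since \(f\) takes only positive values, the hypothesis \(0 \leq \mathbf{b} \leq \mathbf{a}\) gives termwise
\begin{equation*}
  0 \leq \mathbf{b}/f \leq \mathbf{a}/f \ .
\end{equation*}
By assumption, \(\mathbf{a}/f\) is H-null. So there is some \(k > 0\) for which \(M^k(\mathbf{a}/f)\) converges to~\(0\) in the usual sense. We fix this~\(k\) and aim to show that \(M^k(\mathbf{b}/f)\) also converges to~\(0\). That shows \(\mathbf{b}/f\) is \(M^k\)-convergent with limit~\(0\), hence H-null, which is exactly \(\mathbf{b} = \mathrm{\relax H}\hbox{-}o(f)\).

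The key step is that \(M\), and hence every power \(M^k\), preserves the termwise order on sequences. All entries \(m_{i,j}\) are non-negative, and each row has only finitely many non-zero entries, so \((M\mathbf{c})_i = \frac1i \sum_{j \leq i} c_j\) is a finite sum. Consequently \(\mathbf{c} \leq \mathbf{d}\) termwise implies \(M\mathbf{c} \leq M\mathbf{d}\) termwise. By induction on~\(k\), using \(M^k\mathbf{c} = M(M^{k-1}\mathbf{c})\), we get \(M^k\mathbf{c} \leq M^k\mathbf{d}\). The same reasoning shows \(M^k\) maps non-negative sequences to non-negative sequences.

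Applying this with \(\mathbf{c} = \mathbf{b}/f\) and \(\mathbf{d} = \mathbf{a}/f\) yields
\begin{equation*}
  0 \leq M^k(\mathbf{b}/f) \leq M^k(\mathbf{a}/f)
\end{equation*}
termwise. The right-hand side tends to~\(0\), so the squeeze theorem gives \(M^k(\mathbf{b}/f) \to 0\).

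There is no real obstacle here. The only point to watch is that H-convergence of \(\mathbf{a}/f\) is witnessed by a specific exponent~\(k\), and we must use that same \(k\) for \(\mathbf{b}/f\), rather than trying to compare the Hölder limits directly. The non-negativity of \(\mathbf{b}\) is essential: it provides the lower bound needed for the squeeze.
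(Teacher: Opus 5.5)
Your proof is correct and follows the paper's argument exactly: both use \(0 \leq \mathbf{b}/f \leq \mathbf{a}/f\) termwise and the non-negativity of the entries of \(M^k\) to get \(0 \leq M^k(\mathbf{b}/f) \leq M^k(\mathbf{a}/f)\), then squeeze for the same exponent~\(k\). Your induction showing that \(M^k\) preserves the termwise order just spells out a step the paper states without comment.
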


\begin{proof}
The inequalities \(0 \leq b_n \leq a_n\), valid for all~\(n\), imply
the inequalities \(0 \leq \big(M^k (\mathbf{b}/f)\big)_n \leq \big(M^k
(\mathbf{a}/f)\big)_n\). Thus if \(\big(M^k (\mathbf{a}/f)\big)_n \to
0\) then also \(\big(M^k (\mathbf{b}/f)\big)_n \to 0\).
\end{proof}

\begin{lem}
\label{H-null-implies-H-null}
If \(\mathbf{a}\) is absolutely H-null, then \(\mathbf{a}\) is H-null.
\end{lem}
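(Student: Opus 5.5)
The idea is to compare \(M^k\mathbf{a}\) termwise with \(M^k|\mathbf{a}|\), using that the Cesàro matrix has non-negative entries. Suppose \(\mathbf{a}\) is absolutely H-null, so there is some \(k > 0\) with \(M^k|\mathbf{a}| \to 0\). Since every entry \(m_{i,j}\) of \(M\) is non-negative, so is every entry of \(M^k\). Call the entries of \(M^k\) \(m^{(k)}_{i,j}\); they are finite sums of products of entries of \(M\), and each row has only finitely many non-zero entries because \(M\) is lower triangular.

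The key step is the triangle inequality applied row by row. For each \(n\),
\begin{equation*}
  \big|(M^k\mathbf{a})_n\big| = \Big|\sum_{j} m^{(k)}_{n,j}\, a_j\Big|
  \leq \sum_j m^{(k)}_{n,j}\, |a_j| = \big(M^k|\mathbf{a}|\big)_n \ .
\end{equation*}
Alternatively, one can argue by induction on \(k\) using the single-step inequality \(|M\mathbf{b}| \leq M|\mathbf{b}|\) together with monotonicity of \(M\) on non-negative sequences. This monotonicity is the same observation already used in the proof of Lemma~\ref{H-o-n-order}.

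Since the right-hand side tends to \(0\), the squeeze principle gives \((M^k\mathbf{a})_n \to 0\). Hence \(\mathbf{a}\) is \(M^k\)-convergent with limit \(0\), that is, H-null.

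I expect no real obstacle here. The only point needing care is that the relevant entries of \(M^k\) are non-negative and that each row sum is finite, so that the termwise manipulations are legitimate. Both facts follow immediately from \(M\) being lower triangular with non-negative entries.
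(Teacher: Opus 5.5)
Your proof is correct and matches the paper's argument: both use the non-negativity of the entries of \(M^k\) and the row-by-row triangle inequality \(\big|(M^k\mathbf{a})_n\big| \leq (M^k|\mathbf{a}|)_n\) to squeeze \(M^k\mathbf{a}\) to zero. The inductive alternative you mention is unnecessary but harmless.
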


\begin{proof}
It suffices to show that, given \(k \gg 0\), if \(M^k |\mathbf{a}|\)
is a null sequence then \(M^k \mathbf{a}\) is a null sequence (in the
sense of usual limits). Write \(M^k_{n,j}\) for the \((n,j)\)-entry of
the matrix~\(M^k\). Then since \(M^k_{n,j} \geq 0\),
\begin{equation*}
  \Big| (M^k \mathbf{a})_n \Big| =
  \bigg| \sum_{j=1}^n M^k_{n,j} a_j \bigg| \leq
  \sum_{j=1}^n M^k_{n,j} |a_j| = (M^k |\mathbf{a}|)_n \ ,
\end{equation*}
and the right-hand side vanishes in the limit \(n \to \infty\) by
hypothesis.
\end{proof}

Applying this to the sequences \(\mathbf{a}/n\) and \(|\mathbf{a}/n|
= |\mathbf{a}|/n\) immediately gives:

\begin{cor}
\label{H-on-implies-H-on}
If \(|\mathbf{a}| = \mathrm{\relax H}\hbox{-}o(n)\) then \(\mathbf{a}
= \mathrm{\relax H}\hbox{-}o(n)\). \qed
\end{cor}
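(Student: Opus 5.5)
The corollary follows from Lemma~\ref{H-null-implies-H-null} applied to the sequence \(\mathbf{a}/n = (a_n/n)_{n \in \bN}\). The plan is to unwind the definitions on both sides and check that they match the hypothesis and conclusion of that lemma.

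First, I will translate the hypothesis. By definition, \(|\mathbf{a}| = \mathrm{\relax H}\hbox{-}o(n)\) means that the sequence \(|\mathbf{a}|/n = (|a_n|/n)_{n\in\bN}\) is H-null. Since \(n > 0\), we have termwise \(|a_n|/n = |a_n/n|\), so \(|\mathbf{a}|/n = |\mathbf{a}/n|\). Hence the hypothesis says exactly that \(|\mathbf{a}/n|\) is H-convergent with H-limit~\(0\). In the terminology of the definition above, this means \(\mathbf{a}/n\) is absolutely H-null.

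Second, I will invoke Lemma~\ref{H-null-implies-H-null} with the sequence \(\mathbf{a}/n\) in place of~\(\mathbf{a}\). It yields that \(\mathbf{a}/n\) is H-null. By the definition of \(\mathrm{\relax H}\hbox{-}o\), this is precisely the statement \(\mathbf{a} = \mathrm{\relax H}\hbox{-}o(n)\).

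There is no genuine obstacle here. The only point needing care is the identification \(|\mathbf{a}|/n = |\mathbf{a}/n|\), which uses only the positivity of the weight \(f(n) = n\). The same argument would therefore work verbatim for any positive weight function~\(f\).
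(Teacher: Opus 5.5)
Your proposal is correct and matches the paper's argument exactly: the paper also obtains the corollary by applying Lemma~\ref{H-null-implies-H-null} to the sequence \(\mathbf{a}/n\), using the identification \(|\mathbf{a}/n| = |\mathbf{a}|/n\). Your remark that the argument works for any positive weight function \(f\) is also right.
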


Given a sequence \(\mathbf{a} = (a_k)_{k \in\bN}\) we let
\(\mathbf{a}[m] = (b_k)_{k \in \bN}\) denote its \emph{\(m\)-th shift},
where
\begin{equation*}
  b_k =
  \begin{cases}
    a_{k-m} & \text{if } k-m > 0, \\
    0 & \text{else}.
  \end{cases}
\end{equation*}

\begin{lem}
\label{lem:shift_invariance} Let \(m\) be an integer. The sequence
\(\mathbf{a}\) is H-convergent if and only if the sequence
\(\mathbf{a}[m]\) is H-convergent, in which case their H-limits
agree. In particular, if \(\mathbf{a}\) has H-limit~\(s\) and
\(\mathbf{b}\) is obtained from~\(\mathbf{a}\) by omitting, inserting
or modifying finitely many terms, then \(\mathbf{b}\) also has
H-limit~\(s\).
\end{lem}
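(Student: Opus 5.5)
We reduce to the case \(m = 1\): the general statement for \(m \geq 0\) follows by iterating \(m\) times. For \(m < 0\), note that \(\mathbf{a}[m][-m]\) agrees with \(\mathbf{a}\) except possibly in the first \(|m|\) terms, which are set to~\(0\). Hence it suffices to treat \(m = 1\) and also to show that changing finitely many terms preserves H-convergence and the H-limit. The last assertion of the lemma is then a combination of these facts. Modifying finitely many terms is handled directly: if \(\mathbf{a}-\mathbf{b}\) is supported on \(\{1,\dots,N\}\), then by linearity of~\(M^k\) it suffices to show \(M^k \mathbf{d} \to 0\) for finitely supported~\(\mathbf{d}\). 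For \(k = 1\) we have \((M\mathbf{d})_n = c/n\) for \(n \geq N\), with \(c = \sum d_j\), and this tends to~\(0\). Since \(M\) preserves convergent sequences and their limits (the elementary fact quoted before the definition of H-convergence), \(M^k\mathbf{d} \to 0\) for all \(k \geq 1\). Omitting or inserting terms is a shift applied to a tail together with a finite modification, so it too reduces to the case \(m = 1\).

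For \(m = 1\), write \(\mathbf{b} = \mathbf{a}[1]\). The key computation is to compare \(M\mathbf{b}\) with a shift of \(M\mathbf{a}\):
\begin{equation*}
  (M\mathbf{b})_n = \frac1n \sum_{j=1}^{n-1} a_j = \frac{n-1}{n}\,(M\mathbf{a})_{n-1} .
\end{equation*}
Thus \(M(\mathbf{a}[1]) = D\big((M\mathbf{a})[1]\big)\), where \(D\) is the diagonal operator multiplying the \(n\)-th term by \((n-1)/n\). By induction on~\(k\), I would then aim to show that \(M^k(\mathbf{a}[1]) - (M^k\mathbf{a})[1]\) is a null sequence whenever \(M^k\mathbf{a}\) converges. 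This gives \(\lim M^k(\mathbf{a}[1]) = \lim M^k \mathbf{a}\).

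I expect this inductive error control to be the main obstacle, since the correction \(D - \mathrm{id}\) multiplies by \(-1/n\), and \(M^{k-1}\mathbf{a}\) need not be bounded. To handle it, write
\begin{equation*}
  M\mathbf{b} = (M\mathbf{a})[1] - \tfrac1n (M\mathbf{a})[1] .
\end{equation*}
Applying \(M^{k-1}\), the first term is treated by induction, with \(M\mathbf{a}\) in place of~\(\mathbf{a}\); this is legitimate because \(M\mathbf{a}\) is \(M^{k-1}\)-convergent. For the error term, Lemma~\ref{lem:H-conv-H-on} applied to the \(M^{k-1}\)-convergent sequence \(M\mathbf{a}\) gives \((M\mathbf{a})_n = o(n^{k-1})\), so \(\mathbf{e} = \tfrac1n(M\mathbf{a})[1] = o(n^{k-2})\). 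It remains to check that \(M^{k-1}\mathbf{e} \to 0\). Since \(M\) sends \(o(n^{j})\) to \(o(n^{j})\) (an elementary Cesàro estimate), it also sends \(o(n^{j})\) to \(o(n^{j-1})\)? That is false in general, so instead I would establish by a direct estimate that \(\mathbf{e}\) is \(M^{k-1}\)-null. For this, use the identity \(\tfrac1n \mathbf{x} = \mathbf{x}' \)-type relations: note \(\tfrac1n (M\mathbf{a})\) relates to differences \(M\mathbf{a} - M^2\mathbf{a}\), since \((M^2\mathbf{a})_n\) and \((M\mathbf{a})_n\) differ by Cesàro averaging. Concretely, \(\mathbf{a} - M\mathbf{a} = (n-1)\big(M\mathbf{a} - (M\mathbf{a})[1]\big)/n\cdot\)(suitable), which expresses \(\tfrac1n\)-weighted terms through differences of convergent-after-\(M^{k-1}\) sequences with equal limits.

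If this bookkeeping becomes messy, the fallback is a cleaner route that avoids it. Since \(M\) is injective, the H-limit is the same as the Cesàro–Hölder limit computed via the equivalent Cesàro \((C,k)\) means, for which shift invariance is classical and follows from the explicit kernel \(\binom{n-j+k-1}{k-1}/\binom{n+k-1}{k}\), whose shift changes the weights only by a factor \(1+O(1/n)\). The equivalence of the Hölder and Cesàro methods is a known theorem that I would cite if it is permitted.
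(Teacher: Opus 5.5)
Your reductions (negative shifts via $\mathbf a[m][-m]$, finite modifications, insertion and omission) are fine. However, they need \emph{both} directions of the case $m=1$, and your inductive plan only aims at ``$M^k\mathbf a\to\ell$ implies $M^k(\mathbf a[1])\to\ell$''. The converse, $\mathbf a[1]$ H-convergent $\Rightarrow$ $\mathbf a$ H-convergent, is required by the ``if and only if'' and by your treatment of $m<0$, but you never address it.

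More seriously, the forward direction is itself left open at exactly the step you flag: you never show $M^{k-1}\big(\tfrac1n(M\mathbf a)[1]\big)\to 0$. Knowing only $o(n^{k-2})$ from Lemma~\ref{lem:H-conv-H-on} cannot suffice for $k\geq 3$; for example, $\sqrt n=o(n)$ is not $M^2$-null. The identity you gesture at is, correctly stated, $\mathbf a-(M\mathbf a)[1]=n\big(M\mathbf a-(M\mathbf a)[1]\big)$. It only gives $\tfrac1n(M\mathbf a)[1]=\mathbf a/n-M\mathbf a+(M\mathbf a)[1]$, so it relocates the problem to proving $M^{k-1}(\mathbf a/n)\to0$. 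Indeed, the identity $M(\mathbf a[1])=M\mathbf a-\mathbf a/n$ shows that shift invariance at level $k$ is \emph{equivalent} to ``$\mathbf a$ $M^k$-convergent $\Rightarrow$ $M^{k-1}(\mathbf a/n)\to0$''.

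That is the missing lemma, and it is not a growth estimate. It can be proved by induction on $k$, jointly with shift invariance, using summation by parts in the form $M(\mathbf x/n)=(M\mathbf x)/n+M\big((M\mathbf x)[1]/n\big)$. With $\mathbf y=M\mathbf a$ this gives $M^{k-1}(\mathbf a/n)=M^{k-2}(\mathbf y/n)+M\big(M^{k-2}(\mathbf y[1]/n)\big)$. Both terms are null by the level-$(k-1)$ statements applied to $\mathbf y$ and $\mathbf y[1]$, together with regularity of $M$. The base case is Lemma~\ref{lem:H-conv-H-on} for $k=1$. The converse direction then goes the same way, via $M(\mathbf c[-1])=(M\mathbf c)[-1]+(M\mathbf c)[-1]/n-c_1/n$.

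Your fallback, by contrast, is sound. It is on the same footing as the paper, which gives no argument and simply cites Dienes. Grant the Knopp--Schur theorem that $M^k$-convergence coincides with $(C,k)$-convergence with the same limit; the paper cites Schur for this in its concluding remarks. Then the means $\sigma^k_n(\mathbf a)=\binom{n+k-1}{k}^{-1}\sum_{j=1}^n\binom{n-j+k-1}{k-1}a_j$ satisfy the exact identity $\sigma^k_n(\mathbf a[1])=\frac{n-1}{n+k-1}\,\sigma^k_{n-1}(\mathbf a)$, which yields both directions at once. So the proposal is complete only through this citation; the inductive part, as written, is not a proof.
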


For a proof see p419, \S100 of~\cite{zbMATH03127712}.

\begin{rem}
By its very definition, the notion of \(M^k\)-convergence is a "matrix
method" where a sequence is transformed by application of a single
matrix, possibly converting a non-convergent sequence into a
convergent one. It can be shown that, in contrast, H-convergence is
\emph{not} a matrix method, \emph{cf.}~\cite[Satz~8.1]{zbMATH03070559}.
\end{rem}

\section{Admissible chain complexes and the infinite Euler characteristic}
\label{sec:org8bbf230}
\label{sec:characteristic}

For the remainder of the paper we will work with modules over an
arbitrary (unital) commutative Noetherian integral domain~\(R\); the
reader may want to keep the specific example \(R = \bZ\) in mind
(where \(R\)-modules are nothing but abelian groups) which will
illustrate the salient points of the theory in full.

\begin{dfn}
The \emph{rank} of an \(R\)-module \(A\) is defined as
\begin{equation*}
  \mathrm{rank}\,A = \dim_{Q} (A \otimes_{R} Q)
\end{equation*}
where \(Q\) is the field of fractions of~\(R\), and "\(\dim\)" denotes
the vector space dimension.
\end{dfn}

In the motivating case \(R = \bZ\) we have \(Q = \bQ\). --- In
general, the field \(Q\) is a localisation of the ring~\(R\), hence a
flat \(R\)-module. This means that from any short exact sequence of
\(R\)-modules
\begin{equation}
\label{eq:ses-modules}
0 \to A_1 \to A_2 \to A_3 \to 0
\end{equation}
we obtain a short exact sequence of \(Q\)-vector space
\begin{equation*}
  0 \to A_1 \otimes_R Q \to A_2 \otimes_R Q \to A_3 \otimes_R Q \to 0
\end{equation*}
which implies:

\begin{lem}
\label{lem:rank_additive} The rank is an additive function. More
precisely, for every short exact sequence~\ref{eq:ses-modules} we obtain the
additivity relation
\begin{equation*}
  \mathrm{rank}\,(A_1) - \mathrm{rank}\,(A_2) + \mathrm{rank}\,(A_3) =
  0 \ . \tag*{\qedsymbol}
\end{equation*}
\end{lem}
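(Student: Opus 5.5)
The plan is to reduce the additivity of rank to the additivity of vector space dimension over the field of fractions~\(Q\). Starting from the short exact sequence~\eqref{eq:ses-modules}, we tensor with~\(Q\) over~\(R\). As noted above, \(Q\) is a localisation of~\(R\) and therefore a flat \(R\)-module. Consequently the tensored sequence
\begin{equation*}
  0 \to A_1 \otimes_R Q \to A_2 \otimes_R Q \to A_3 \otimes_R Q \to 0
\end{equation*}
is again exact. Flatness is needed only on the left, for injectivity of the first map; right exactness of \(-\otimes_R Q\) holds in any case.

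We then apply the rank--nullity relation for \(Q\)-linear maps to this sequence. Write \(V_i = A_i \otimes_R Q\). The map \(V_2 \to V_3\) is surjective, and its kernel is the image of \(V_1\). The map \(V_1 \to V_2\) is injective, so this image is isomorphic to~\(V_1\). Hence
\begin{equation*}
  \dim_Q V_2 = \dim_Q V_1 + \dim_Q V_3 ,
\end{equation*}
which, by the definition of rank, is exactly the relation \(\mathrm{rank}\,(A_1) - \mathrm{rank}\,(A_2) + \mathrm{rank}\,(A_3) = 0\).

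The only delicate point is that the modules need not be finitely generated, so the dimensions may be infinite. Read with cardinal arithmetic, the identity \(\dim V_2 = \dim V_1 + \dim V_3\) still holds, because a basis of~\(V_1\) together with lifts of a basis of~\(V_3\) forms a basis of~\(V_2\). The alternating-sum form with a minus sign, however, only makes sense when the ranks are finite. We will therefore note that the statement is meant for modules of finite rank, which are the only ones the paper uses for homology later. Under that reading, if any two of the three ranks are finite, so is the third.

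The step that carries the real content is flatness of the localisation \(Q = S^{-1}R\) with \(S = R \setminus \{0\}\). We take this as standard, namely that \(S^{-1}R \otimes_R A \cong S^{-1}A\) and that localisation is exact. It is already asserted in the text preceding the lemma, so no real obstacle remains.
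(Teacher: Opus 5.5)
Your proof is correct and follows the paper's argument: you tensor the sequence with the flat \(R\)-module \(Q\) and use additivity of \(\dim_Q\) on the resulting short exact sequence of \(Q\)-vector spaces. Your remark about infinite ranks is a sensible clarification that the paper leaves implicit: the cardinal identity still holds, but the alternating-sum form needs finite ranks.
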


\begin{dfn}
\label{def:pre-adm} Let \(C\) be a chain complex of \(R\)-modules. We call
\(C\) a \emph{Hölder pre-admissible chain complex}, or just
\emph{pre-admissible}, if

\begin{enumerate}[{\rm (a)}]
\item \(C\) vanishes in negative chain degrees (so \(C\) is a "positive"
chain complex), that is, \(C_{n} = 0\) for \(n < 0\),
\item all homology modules \(H_n C\) are finitely generated \(R\)-modules,
\item \label{main_cond} the sequence
\begin{equation*}
  \mathbf{H}C = \big( (-1)^{n-1} \cdot \mathrm{rank}\,
  H_{n-1} C\big)_{n \in \mathbb{N}}
\end{equation*}
is H-convergent.
\end{enumerate}

We denote the category of pre-admissible chain complexes and arbitrary
chain morphisms by the symbol~\(\Cp\).
\end{dfn}

Note that by Lemma~\ref{lem:H-conv-H-on} the sequence \(\mathbf{H}C/n^k\)
associated with a pre-admissible complex~\(C\) converges to~0 for \(k
\gg 0\). We will later want to impose the condition that the sequence
\(\mathbf{H}C/n\) be \emph{absolutely} H-null:

\begin{dfn}
\label{def:adm} A Hölder pre-admissible chain complex \(C\) is called
\emph{Hölder admissible}, or just \emph{admissible}, if it satisfies
\(|\mathbf{H}C| = \mathrm{\relax H} \hbox{-}o(n)\). Explicitly, this
means that the sequence
\begin{equation*}
  \Big(\frac 1n \mathrm{rank}\,H_{n-1}C \Big)_{n \in \bN}
\end{equation*}
is H-convergent with H-limit~0.

We denote the full subcategory of~\(\Cp{}\) consisting of the
admissible chain complexes and arbitrary chain morphisms by the
symbol~\(\C{}\).
\end{dfn}

\begin{dfn}
The \emph{Hölder method infinite Euler characteristic}, or \emph{HE
  characteristic} for short, of the pre-admissible chain complex \(C
  \in \Cp{}\) is defined by
\begin{equation*}
  \cH(C) = \mathrm{\relax H}\hbox{-}\lim \mathbf{H}C \ .
\end{equation*}
\end{dfn}

By construction, \(\cH\) is invariant under quasi-isomorphism of chain
complexes. --- By Lemma~\ref{lem:shift_invariance}, \(\cH(C) = \cH(D)\)
whenever the sequence \(\mathbf{H}D\) can be obtained
from~\(\mathbf{H}C\) by inserting, deleting or modifying finitely many
entries, paying attention to the signs in the definition
of~\(\mathbf{H}C\). For example, we have the following result:

\begin{prop}
\label{prop:minus} For any \(m \in \mathbb{Z}\), we have \(\mathbf{H}
\big(C[m]\big) = (-1)^m (\mathbf{H}C)[m]\) and hence \(\cH
\big(C[m]\big) = (-1)^m \cH(C)\). Here \(C[m]\) is the \(m\)th
shift suspension of~\(C\), with \(C[m]_k = C_{k-m}\). \qed
\end{prop}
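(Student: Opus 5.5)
The plan is to first establish the identity of sequences \(\mathbf{H}\big(C[m]\big) = (-1)^m (\mathbf{H}C)[m]\) by direct comparison of entries, and then deduce the statement about \(\cH\) from Lemma~\ref{lem:shift_invariance} together with linearity of H-limits.

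For the identity, I compute the \(n\)-th entry of \(\mathbf{H}\big(C[m]\big)\). Since \(C[m]_k = C_{k-m}\), and the differentials are shifted accordingly (possibly with a sign, which does not affect kernels, images or homology), we have \(H_{n-1}(C[m]) = H_{n-1-m}(C)\). Hence the \(n\)-th entry is
\begin{equation*}
  (-1)^{n-1}\,\mathrm{rank}\,H_{n-1-m}C = (-1)^m \cdot (-1)^{(n-m)-1}\,\mathrm{rank}\,H_{(n-m)-1}C .
\end{equation*}
When \(n - m > 0\) this is \((-1)^m\) times the \((n-m)\)-th entry of \(\mathbf{H}C\), which is exactly the \(n\)-th entry of \((-1)^m(\mathbf{H}C)[m]\). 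When \(n-m \le 0\), the index \(n-1-m\) is negative, so \(H_{n-1-m}C = 0\) because \(C\) vanishes in negative degrees. The entry is therefore \(0\), in agreement with the definition of the shift.

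There is one subtlety for \(m<0\). Here \(C[m]\) may be nonzero in negative degrees, so it need not be pre-admissible in the strict sense, and \(\mathbf{H}(C[m])\) as defined simply ignores those negative-degree homology modules. The computation above still matches the definition of \(\mathbf{a}[m]\), which discards the first \(|m|\) terms. So the identity holds as stated, provided \(\mathbf{H}\) is read formally for such complexes; I would note this explicitly in the proof.

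For the Euler characteristic, Lemma~\ref{lem:shift_invariance} shows that \((\mathbf{H}C)[m]\) is H-convergent with the same H-limit as \(\mathbf{H}C\). Since each \(M^k\) is linear, multiplying a sequence by the scalar \((-1)^m\) multiplies its H-limit by \((-1)^m\). Hence \(\cH(C[m]) = (-1)^m\cH(C)\). I expect no real obstacle: the only points requiring care are the sign bookkeeping and the treatment of negative \(m\) just described.
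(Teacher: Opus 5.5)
Your argument is correct and is the routine verification the paper leaves implicit: it marks the proposition with \qed and appeals to Lemma~\ref{lem:shift_invariance}, so the intended proof is an entrywise comparison using $(-1)^{n-1} = (-1)^m(-1)^{(n-m)-1}$ and the vanishing of $C$ in negative degrees, followed by shift invariance and linearity of H-limits. Your caveat is also a legitimate point the paper glosses over: for $m<0$ the complex $C[m]$ is no longer positive, so $\mathbf{H}(C[m])$ must be read formally, discarding the negative-degree homology.
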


We omit the easy proof of the following Proposition; in any case, a
stronger additivity property will be discussed and proved in the next
section.

\begin{prop}
\label{prop:plus} The function \(\cH\) is additive with respect to
direct sums: For \(C,\, D \in \Cp{}\) the equality \(\cH(C
\oplus D) = \cH(C) + \cH(D)\) holds. \qed
\end{prop}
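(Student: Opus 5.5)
The plan is to reduce the statement to linearity of the Hölder limit together with additivity of homology and rank.

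First, homology commutes with direct sums: \(H_n(C \oplus D) \cong H_n C \oplus H_n D\) for every \(n\), since the differential of \(C \oplus D\) is the diagonal sum of the differentials. Applying Lemma~\ref{lem:rank_additive} to the split short exact sequence \(0 \to H_n C \to H_n(C\oplus D) \to H_n D \to 0\) gives \(\mathrm{rank}\,H_n(C\oplus D) = \mathrm{rank}\,H_n C + \mathrm{rank}\,H_n D\). Multiplying by \((-1)^{n}\) and reindexing yields the termwise identity of sequences
\begin{equation*}
  \mathbf{H}(C\oplus D) = \mathbf{H}C + \mathbf{H}D \ .
\end{equation*}
In passing, \(C \oplus D\) is pre-admissible: it vanishes in negative degrees, and its homology modules are finitely generated as direct sums of finitely generated modules. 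H-convergence of \(\mathbf{H}(C\oplus D)\) will follow from the next step.

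Second, I will check that Hölder limits are linear. Suppose \(\mathbf{H}C\) is \(M^k\)-convergent and \(\mathbf{H}D\) is \(M^l\)-convergent. Put \(m = \max(k,l)\). By the fact recalled in \S\ref{sec:Hölder limits}, both sequences are then \(M^m\)-convergent with unchanged limits. Since \(M^m\) acts linearly on sequences, we have
\begin{equation*}
  M^m\big(\mathbf{H}C+\mathbf{H}D\big) = M^m\mathbf{H}C + M^m\mathbf{H}D \ ,
\end{equation*}
and each entry is a finite sum, so there is no convergence issue in this identity. The right-hand side converges to \(\cH(C)+\cH(D)\) by ordinary linearity of limits. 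Hence \(\mathbf{H}(C\oplus D)\) is H-convergent with H-limit \(\cH(C)+\cH(D)\), which is the claim.

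I do not expect a real obstacle. The only point needing care is to pass to a common power \(m\) of \(M\) before adding the two sequences, and this is exactly what the monotonicity of \(M^k\)-convergence allows.
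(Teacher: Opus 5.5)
Your proof is correct and is essentially the easy argument the paper omits: the termwise identity \(\mathbf{H}(C\oplus D)=\mathbf{H}C+\mathbf{H}D\), combined with linearity of Hölder limits after passing to a common power of \(M\). The paper also points to the stronger Theorem~\ref{additive}, applied to the split sequence where all connecting maps \(\delta_n\) vanish, and that route reduces to the same computation; your direct version additionally proves that \(C\oplus D\) is pre-admissible, which Theorem~\ref{additive} assumes rather than proves.
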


The function \(\cH\) is not interesting for bounded complexes. In
fact, one easily verifies:

\begin{prop}
\label{prop:bounded} A homologically bounded chain complex~\(C\) has HE
characteristic \(\cH(C) = 0\), since \(\mathrm{\relax H}\hbox{-}\lim
\mathbf{H}C = \lim \mathbf{H}C = 0\). In particular, the HE
characteristic of a complex vanishes whenever the classical Euler
characteristic is defined. \qed
\end{prop}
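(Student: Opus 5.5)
The plan is to reduce the statement to the fact that a sequence with only finitely many nonzero terms has ordinary limit~$0$, and then appeal to the compatibility of H-limits with ordinary limits.

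\emph{Step 1: finitely many nonzero terms.} Suppose $C$ is homologically bounded, say $H_k C = 0$ for all $k > N$. Then every entry $(-1)^{n-1}\mathrm{rank}\,H_{n-1}C$ of $\mathbf{H}C$ with $n > N+1$ vanishes. By Definition~\ref{def:pre-adm}(a) the complex has no homology in negative degrees, so $\mathbf{H}C$ has at most $N+1$ nonzero terms. Hence $\mathbf{H}C$ is eventually zero, and its ordinary limit is $0$. For $\mathbf{H}C$ to be defined we also need the homology modules to be finitely generated, as in Definition~\ref{def:pre-adm}(b), which is part of the standing hypothesis that $C$ lies in $\Cp$.

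\emph{Step 2: H-limit equals the ordinary limit.} I would show that $\mathrm{H}\hbox{-}\lim \mathbf{H}C$ exists and equals $\lim \mathbf{H}C = 0$. The quickest route is Lemma~\ref{lem:shift_invariance}: $\mathbf{H}C$ is obtained from the zero sequence by modifying finitely many terms. The zero sequence is trivially $M$-convergent with H-limit~$0$, since $M\mathbf{0} = \mathbf{0}$. Therefore $\mathbf{H}C$ is H-convergent with H-limit~$0$.

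Alternatively, one can compute directly. If $a_n = 0$ for $n > N+1$, then for $n > N+1$
\[
(M\mathbf{a})_n = \frac1n \sum_{j=1}^{N+1} a_j \to 0,
\]
so $\mathbf{a}$ is already $M$-convergent with limit~$0$. In particular condition~(c) holds, and any homologically bounded complex with finitely generated homology in non-negative degrees is automatically pre-admissible. It is in fact admissible, because $|\mathbf{H}C|/n$ is also eventually zero.

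\emph{Step 3: the ``in particular'' clause.} The classical Euler characteristic is defined exactly when $C$ is homologically bounded with finitely generated homology (up to the quasi-isomorphism invariance noted in the Introduction). So this case is covered by Step~2, and $\cH$ is quasi-isomorphism invariant, so the conclusion transfers along quasi-isomorphisms.

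There is no real obstacle here. The only point needing care is to interpret ``the classical Euler characteristic is defined'' as homological boundedness together with finite generation. With that reading, the clause is a restatement of the first part.
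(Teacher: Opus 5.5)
Your proposal is correct and follows the paper's one-line argument: $\mathbf{H}C$ is eventually zero, so its ordinary limit is $0$, and ordinary convergence implies H-convergence to the same limit. Your direct computation $(M\mathbf{a})_n = \frac1n\sum_{j\le N+1} a_j \to 0$ checks this explicitly, and the route via Lemma~\ref{lem:shift_invariance} also works; your reading of the ``in particular'' clause through homological boundedness and quasi-isomorphism invariance matches the paper's intent.
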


\begin{xmpl}
  Let \(C\) be the \(\bZ\)-module chain complex defined by
  \(C_{2n} = \mathbb{Z}\) and \(C_{2n+1} = 0\). Then
  \(H_{2n}C = \mathbb{Z}\) and \(H_{2n+1}C = 0\), and the sequence
  \(\mathbf{H}C = (1,\, 0,\, 1,\, 0,\, 1,\, \cdots)\) does not
  converge. In contrast,the sequence
  \(M \cdot \mathbf{H}C = (1,\, \frac12,\, \frac23,\, \frac12,\,
  \frac35,\, \frac12,\, \frac47,\, \cdots)\) converges to~1/2 so that
  \(\cH(C) = \mathrm{\relax H}\hbox{-}\lim \mathbf{H}C = 1/2\).
\end{xmpl}

\begin{xmpl}
\label{1 over m} Generalising the previous example, fix \(m \geq 1\) and
let \(C\) be the chain complex defined by \(C_n = \bZ\) if \(2m \mid
n\), and \(C_n = 0\) otherwise. Then writing \(n = 2mq + k\) with \(0
\leq k < 2m\), the \(n\)th term of \(M \cdot \mathbf{H}C\) is
\((q+1)/(2mq+k)\). Thus taking the limit \(n \to \infty\), that is,
\(q \to \infty\), we see \(\cH(C) = \mathrm{\relax H}\hbox{-}\lim
\mathbf{H}C = 1/2m\).
\end{xmpl}

\begin{prop}
For every rational number~\(q \in \bQ\) there exists an admissible
chain complex \(D \in \C{}\) with \(\cH(D) = q\).
\end{prop}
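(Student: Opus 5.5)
We build $D$ out of the complexes of Example~\ref{1 over m} using direct sums (Proposition~\ref{prop:plus}) and shifts (Proposition~\ref{prop:minus}). Write $q = a/b$ with $b \geq 1$ and $a \in \bZ$.

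\textbf{The basic building block.} Take $C$ as in Example~\ref{1 over m} with $m = b$, so that $\cH(C) = 1/2b$. The shift $C[1]$ has $\cH(C[1]) = -\cH(C) = -1/2b$ by Proposition~\ref{prop:minus}, and it is still concentrated in non-negative degrees.

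\textbf{Assembling $D$.} If $a \geq 0$, let $D$ be the direct sum of $2a$ copies of $C$. If $a < 0$, let $D$ be the direct sum of $2|a|$ copies of $C[1]$. If $a = 0$, take $D = 0$, or any bounded complex, using Proposition~\ref{prop:bounded}. By Proposition~\ref{prop:plus}, which applies since pre-admissibility is visibly preserved under finite direct sums and shifts (H-convergence is linear), we get $\cH(D) = 2a \cdot (\pm 1/2b) = q$ in either case. Alternatively, the sequence $\mathbf{H}D$ is just $\pm 2|a|$ times a (shifted) indicator sequence of an arithmetic progression, and its Cesàro limit can be computed directly as in Example~\ref{1 over m}.

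\textbf{Admissibility.} The remaining point is that $D$ lies in $\C{}$, not merely in $\Cp{}$. Here $\operatorname{rank} H_{n-1}D \leq 2|a|$ is bounded, so the sequence $\bigl(\tfrac1n \operatorname{rank} H_{n-1}D\bigr)_n$ is dominated by $2|a|/n$, which is an ordinary null sequence. Ordinary null sequences are H-null, since $M$ preserves limits. Hence $|\mathbf{H}D| = \mathrm{H}\hbox{-}o(n)$, and we could also invoke Lemma~\ref{H-o-n-order} for this comparison.

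\textbf{Expected difficulty.} Nothing here is a real obstacle. The only step needing a moment's care is confirming that the shifted complex $C[1]$ and the direct sums remain pre-admissible, with the expected H-limits. This follows from the formula $\mathbf{H}(C[m]) = (-1)^m(\mathbf{H}C)[m]$ together with Lemma~\ref{lem:shift_invariance}, and from linearity of the matrix $M^k$.
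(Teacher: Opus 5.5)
Your proposal is correct and follows the paper's proof: a direct sum of $2a$ copies of the complex from Example~\ref{1 over m} with $m=b$, a one-step shift via Proposition~\ref{prop:minus} for negative $q$, and admissibility from the uniform bound on the homology ranks. The only cosmetic difference is that you shift each summand rather than the finished sum, which amounts to the same thing.
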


\begin{proof}
Of course \(\cH(0)=0\). If \(q > 0\) write \(q = 2a/2b\) with \(a,b
\in \bN\), choose a complex~\(C \in \C{}\) with \(\cH(C) = 1/2b\)
according to Example~\ref{1 over m}, and let \(D = \bigoplus_{2a} C\). Then
\(\cH(D) = 2a \cdot \cH(C) = 2a/2b = q\) by
Proposition~\ref{prop:plus}. Since \(|\mathbf{H}C|_n \leq 2b\) by
construction, \(\mathbf{H}C = \mathrm{\relax H}\hbox{-}o(n)\) so \(C\)
is an admissible chain complex. --- If \(q<0\), apply this
construction to~\(-q\) and note that then \(\cH \big( C[1] \big) = q\)
by Proposition~\ref{prop:minus}.
\end{proof}

Even better (though less explicit in construction):

\begin{thm}
\label{thm:all_R} For every real number~\(r \in \bR\) there exists an
admissible chain complex \(C \in \C{}\) with \(\cH(C) = r\).
\end{thm}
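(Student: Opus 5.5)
We will build a complex whose homology ranks come from a bounded 0/1 sequence in even degrees only, chosen greedily so that its Cesàro means converge to the prescribed value. Proposition~\ref{prop:minus} and the zero complex reduce the statement to the case \(r > 0\): if \(r<0\) we apply the construction to \(-r\) and shift by one. Write \(r = N + s\) with \(N \in \bZ_{\ge 0}\) and \(s \in [0,1)\). By Example~\ref{1 over m} with \(m=1\) and Proposition~\ref{prop:plus}, the direct sum of \(2N\) copies of the complex \(\bZ\) in every even degree has HE characteristic \(N\) and bounded homology ranks, so it is admissible. By Proposition~\ref{prop:plus} it therefore suffices to produce an admissible complex with \(\cH = s \in [0,1)\). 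Since direct sums of admissible complexes remain admissible (\(|\mathbf{H}(C\oplus D)| \le |\mathbf{H}C|+|\mathbf{H}D|\), combined with Lemma~\ref{H-o-n-order}), that reduction is legitimate.

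For \(s\in[0,1)\) we set \(t = 2s \in [0,2)\). Choose a sequence \(\varepsilon_0,\varepsilon_1,\dots \in \{0,1\}\) whose averages \(\frac1q\sum_{i<q}\varepsilon_i\) converge to \(t/2 = s\). The canonical choice is \(\varepsilon_i = \lfloor (i+1)s\rfloor - \lfloor is\rfloor\), which takes values in \(\{0,1\}\) because \(s<1\); its partial sums are \(\lfloor qs\rfloor\), so the averages tend to \(s\). We then define \(C\) over \(R\) by \(C_{2i} = R^{\varepsilon_i}\) and \(C_{\text{odd}} = 0\), with zero differentials. Then \(H_{2i}C\) has rank \(\varepsilon_i\) and is finitely generated, and \(C\) is positive.

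Next we compute \(M\mathbf{H}C\) directly. The entries of \(\mathbf{H}C\) are \(\varepsilon_i\) in position \(2i+1\) and \(0\) in the even positions. Hence \((M\mathbf{H}C)_n = \frac1n \sum_{i < \lceil n/2\rceil} \varepsilon_i = \lfloor \lceil n/2\rceil s\rfloor / n\). This tends to \(s/2\), not to \(s\); the halving is visible already in Example~\ref{1 over m}. We correct for it by taking \(\varepsilon_i\) with averages \(2s\) instead. This requires \(2s<1\), so the reduction should instead use \(r = N/2 + s\) with \(s\in[0,1/2)\), the summand \(N/2\) coming from \(N\) copies of the periodic complex. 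Then \(\cH(C) = \lim M\mathbf{H}C = s\), with \(k=1\) already sufficing. The only delicate point is this factor of \(2\) coming from the odd-degree zeros; it is handled by the floor-function bookkeeping just described.

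Finally, admissibility follows because the ranks satisfy \(|\mathbf{H}C|_n \le 1\). Then \(|\mathbf{H}C|/n \to 0\) in the ordinary sense, hence also in the H-sense, since ordinary convergence implies \(M\)-convergence to the same limit. This mirrors the argument given for rational \(q\). Putting the pieces together via Proposition~\ref{prop:plus} and Proposition~\ref{prop:minus} yields an admissible complex with \(\cH = r\) for every real \(r\).
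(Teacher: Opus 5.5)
Your argument is correct once the factor-of-two correction you make mid-proof is carried through. Take \(r = N/2 + s\) with \(N = \lfloor 2r \rfloor\), \(s \in [0,1/2)\), and \(\varepsilon_i = \lfloor 2(i+1)s \rfloor - \lfloor 2is \rfloor \in \{0,1\}\). Then \((M\mathbf{H}C)_n = \lfloor 2s\lceil n/2\rceil \rfloor / n \to s\). Adding \(N\) copies of the complex with \(R\) in every even degree gives \(r\); it must be \(R\), not \(\bZ\), since you work over an arbitrary \(R\).

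Your route differs from the paper's in how the even-degree ranks are chosen. The paper fixes non-negative integers \(a < r < b\) and builds a single complex greedily. It switches between ranks \(2a\) and \(2b\) whenever the Cesàro mean crosses \(r\), and then argues, somewhat informally, that the oscillation around \(r\) has amplitude \(O(1/n)\). You instead use a closed-form Beatty-type sequence. Convergence with an explicit \(O(1/n)\) error then follows immediately from \(x-1 < \lfloor x \rfloor \le x\), with no bookkeeping of switching times.

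The price is the detour through a decomposition and Proposition~\ref{prop:plus}, and that detour is unnecessary. Since \(\lfloor 2(i+1)r \rfloor - \lfloor 2ir \rfloor = N + \varepsilon_i\), your direct sum is exactly the single complex with \(C_{2i} = R^{\lfloor 2(i+1)r\rfloor - \lfloor 2ir \rfloor}\), \(C_{\text{odd}}=0\) and zero differentials. That complex works for every \(r \geq 0\) at once. Both constructions have bounded ranks, so admissibility is trivial and \(k=1\) already suffices.

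When you write this up, remove the false start: the reduction \(r = N+s\) with \(s\in[0,1)\), and the first choice of averages \(s\) (which contradicts the line defining \(t = 2s\)). The decomposition and the choice of \(\varepsilon_i\) should be stated consistently from the outset.
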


\begin{proof}
By Proposition~\ref{prop:minus} it is enough to deal with the case \(r >
0\). Choose non-negative integers \(a\) and \(b\) such that \(a < r <
b\). We will define a chain complex~\(C\) with trivial (zero)
differentials, which implies that \(H_nC = C_n\). We start by
declaring \(C_{2n+1} = 0\) for all \(n \geq 0\). The even-numbered
chain modules are chosen as follows:
\begin{itemize}
\item We let \(C_0 = R^{2b}\) and note that \((M \mathbf{H}C)_1 = 2b >
  r\) and \((M \mathbf{H}C)_{2} = 2b/2 = b > r\).
\item We let \(C_j = R^{2a}\) for \(j = 2,\, 4,\, \cdots,\, 2m_1\),
where \(m_1 > 0\) is chosen minimal so that \((M \mathbf{H}C)_{2m_1}
  < r\).
\item We let \(C_j = R^{2b}\) for \(j = 2m_1+2,\, 2m_1+4,\, \cdots,\,
  2m_2\), where \(m_2 > m_1\) is chosen minimal so that \((M
  \mathbf{H}C)_{2m_2} > r\).
\item We let \(C_j = R^{2b}\) for \(j = 2m_2+2,\, 2m_1+4,\, \cdots,\,
  2m_3\), where \(m_3 > m_2\) is chosen minimal so that \((M
  \mathbf{H}C)_{2m_3} < r\).
\item And so on.
\end{itemize}
The resulting sequence \(M \mathbf{H}C\) oscillates around the
value~\(r\), by construction, with amplitude of oscillation bounded by
\((b-a)/n\) at the \(n\)th term so that \(\cH(C) = \lim M \mathbf{H}C
= r\). It is clear that \(|\mathbf{H}C| = o(n)\) by construction since
\(\mathrm{rank}\, H_{n}C\) takes one of the three values 0, \(2a\)
and~\(2b\). Thus in particular \(|\mathbf{H}C| = \mathrm{\relax
H}\hbox{-}o(n)\), making \(C\) a Hölder admissible complex.
\end{proof}

Except for the trivial case \(q=a=b=0\), the chain complex produced in
this proof will have infinitely many non-vanishing homology modules so
that its classical Euler characteristic is \emph{not} defined.

\section{Examples from topology}
\label{sec:orgfa07626}
\label{sec:topology}

Admissible chain complexes arise from suitable topological spaces, for
example by taking the singular chain complex \(C = C(X,R)\) with
coefficients in the ring~\(R\). If \(X\) is a CW-complex we can
consider the cellular chain complex with \(n\)th chain module
a free \(R\)-module with basis the \(n\)-cells of~\(X\). It gives the same
homology modules as the singular complex, and is often easier to use
for computations.

\begin{dfn}
\label{def:Euler_char_X}
We define the \emph{infinite Euler characteristic of the topological
space~\(X\)} by setting \(\cH(X;R) = \cH(C)\), provided the
chain complex \(C\) is pre-admissible.
\end{dfn}

The following facts are immediate from the previous section:

\begin{prop}
\label{prop:topology}
\begin{enumerate}[{\rm (a)}]
\item \label{finitely_many} If \(X\) has only finitely many non-vanishing
homology modules, then \(\cH(X;R) = 0\).
\item \label{homotopy_equivalent} If \(X\) and~\(Y\) are homotopy equivalent,
then \(\cH(X;R) = \cH(Y;R)\).
\end{enumerate}
\end{prop}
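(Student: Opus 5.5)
Both parts reduce to properties of the sequence \(\mathbf{H}C\) for \(C = C(X;R)\) established in the previous section, so the plan is simply to translate the topological hypotheses into statements about homology.

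For part~(\ref{finitely_many}), suppose \(X\) has only finitely many non-vanishing homology modules.
\begin{itemize}
\item The singular chain complex \(C = C(X;R)\) is then homologically bounded.
\item Its homology modules are finitely generated, since we are assuming \(\cH(X;R)\) is defined, i.e.\ \(C\) is pre-admissible.
\item The sequence \(\mathbf{H}C\) is eventually zero. By Lemma~\ref{lem:shift_invariance} it has H-limit equal to its ordinary limit~\(0\).
\item This is exactly the content of Proposition~\ref{prop:bounded}, so \(\cH(X;R) = \cH(C) = 0\).
\end{itemize}
The only point to make explicit is that condition~(\ref{main_cond}) of Definition~\ref{def:pre-adm} holds automatically here, because an eventually-zero sequence is convergent.

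For part~(\ref{homotopy_equivalent}), let \(f\colon X \to Y\) be a homotopy equivalence.
\begin{itemize}
\item The induced chain map \(f_*\colon C(X;R) \to C(Y;R)\) is a chain homotopy equivalence, by homotopy invariance of singular homology. In particular it is a quasi-isomorphism.
\item Hence \(H_n C(X;R) \cong H_n C(Y;R)\) for all~\(n\).
\item It follows that \(\mathbf{H}C(X;R) = \mathbf{H}C(Y;R)\) as sequences. One complex is pre-admissible if and only if the other is, and the two H-limits coincide.
\item This is the observation, made right after the definition of \(\cH\), that \(\cH\) is invariant under quasi-isomorphism.
\end{itemize}

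There is no real obstacle in either part. The one subtlety is interpretive: the statement should be read as saying that if one side is defined then so is the other, and they agree. This follows because pre-admissibility depends only on the homology modules. If cellular chain complexes are used instead of singular ones, we would first invoke the standard isomorphism between cellular and singular homology; this again changes nothing in \(\mathbf{H}C\).
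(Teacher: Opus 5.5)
Your proposal is correct and follows the paper's route. The paper simply declares both facts immediate from Section~\ref{sec:characteristic}: part~(a) from Proposition~\ref{prop:bounded}, and part~(b) from the quasi-isomorphism invariance of \(\cH\), applied via the chain homotopy equivalence of singular complexes, exactly as you do. Your remark that the statement should be read as ``if one side is defined, so is the other, and they agree'' is a sensible clarification that the paper leaves implicit.
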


\begin{xmpl}
If \(X\) is a finite or, more generally, finite-dimensional
CW-complex, then \(\cH(X;R) = 0\). This follows from
Proposition~\ref{prop:topology}~(\ref{finitely_many}) upon inspection of the
cellular chain complex of~\(X\).
\end{xmpl}

\begin{xmpl}
Proposition~\ref{prop:topology} implies that \emph{every contractible space~\(X\)
has vanishing infinite Euler characteristic}. This applies to the
following spaces:

\begin{enumerate}
\item The infinite-dimensional sphere \(S^{\infty} = \bigcup_{k \geq 0}
   S^k\), the union taken with respect to the embedding of \(S^k\)
into~\(S^{k+1}\) as the "equator" and equipped with the weak
topology such that \(F \subseteq S^{\infty}\) is closed if and only
if \(F \cap S^{k} \subseteq S^{k}\) is closed for all \(k \geq 0\).
\item The general linear and unitary groups of an infinite dimensional
separable (real, complex or quaternionic) Hilbert space, equipped
with the norm topology \cite[Theorems 2 and 3]{zbMATH03211541}.
\item The unitary group in \(\mathcal{M}(A \otimes \mathbb{K})\), where
\(A\) is a \(\sigma\)-unital C\textsuperscript{*}-algebra and \(\mathbb{K}\)
denotes the set of compact operators of the Hilbert
space~\(\ell^2\), see \cite{zbMATH03998548} and
\cite[Theorem~16.8]{zbMATH00412105}.
\end{enumerate}
\end{xmpl}

\begin{xmpl}
The infinite wedge of spheres \(X = \bigvee_{k \geq 1} S^{k}\) has
homology \(H_k (X; R) \cong R\) so that the singular chain
complex \(C\) of~\(X\) with coefficients in~\(R\) satisfies
\((\mathbf{H}C)_k = (-1)^{k-1}\) for all \(k \in \bN\). Consequently,
\(\cH(X;R) = 0\).
\end{xmpl}

\begin{xmpl}
  The infinite one-point union of odd-dimensional spheres
  \(Y = \bigvee_{k \geq 0} S^{2k+1}\) has homology
\begin{equation*}
  H_k (Y; R) \cong
  \begin{cases}
    R & \text{for \(k\) odd,} \\
    R & \text{for \(k=0\),} \\
    0 & \text{else}
  \end{cases}
\end{equation*}
so that the singular chain complex~\(C\) of~\(Y\) satisfies
\((\mathbf{H}C)_{2k} = -1\) and \((\mathbf{H}C)_{2k+1} = 0\) for \(k
\in \bN\). Consequently, \(\cH(X;R) = -1/2\).
\end{xmpl}

\section{Additivity of the infinite Euler characteristic}
\label{sec:org6a797c6}
\label{sec:additivity}

A short exact sequence
\begin{equation}
\label{ses}
\mathcal{S} \colon \quad 0 \to A \xrightarrow{f} B \xrightarrow{g} C \to 0
\end{equation}
of pre-admissible chain complexes \(A,\, B,\, C \in \Cp{}\)
yields a long exact sequence of homology modules
\begin{equation}
\label{les}
\ldots \to H_{n+1} C \xrightarrow{\delta_{n+1}}
  H_n A \xrightarrow{H_n(f)} H_n B \xrightarrow{H_n(g)} H_n C
  \xrightarrow{\delta_n} H_{n-1} A \to \ldots 
\end{equation}
terminating with a surjection \(H_0 B \to H_0 C \to 0\). Truncating
to the left at stage~\(n\) yields a finite exact sequence
\begin{multline*}
  0 \to \mathrm{im}\, \delta_{n} \to H_{n-1}A \to H_{n-1}B \to H_{n-1}C \\
  \to H_{n-2} A \to H_{n-2} B \to H_{n-2} C \\
  \ddots \\
  \qquad \qquad \qquad \qquad \to H_1 A \to H_1 B \to H_1 C \\
  \to H_0 A \to H_0 B \to H_0 C \to 0
\end{multline*}
of finitely generated \(R\)-modules. By exactness, the alternating sum
of the ranks of the modules in this truncated sequence vanishes, that
is,
\begin{multline*}
  \sum_0^{n-1} (-1)^j \cdot \mathrm{rank}\, H_j A
  - \sum_0^{n-1} (-1)^j \cdot \mathrm{rank}\, H_j B
  + \sum_0^{n-1} (-1)^j \cdot \mathrm{rank}\, H_j C \\
  = (-1)^{n-1} \cdot \mathrm{rank}\, \mathrm{im}\, \delta_{n} \ .
\end{multline*}
Upon division by~\(n\) we obtain
\begin{multline*}
  \frac1{n} \sum_0^{n-1} (-1)^j \cdot \mathrm{rank}\, H_j A
  - \frac1{n} \sum_0^{n-1} (-1)^j \cdot \mathrm{rank}\, H_j B
  + \frac1{n} \sum_0^{n-1} (-1)^j \cdot \mathrm{rank}\, H_j C \\
  = \frac{1}{n} \cdot (-1)^{n-1} \mathrm{rank}\, \mathrm{im}\, \delta_{n} \ ,
\end{multline*}
that is, we have the relation
\begin{equation*}
(M\mathbf{H}A)_{n} - (M\mathbf{H}B)_{n} + (M\mathbf{H}C)_{n}
  = \frac{1}{n} \cdot (-1)^{n-1} \mathrm{rank}\, \mathrm{im}\, \delta_{n}
  \qquad \text{(\(n \geq 1\))}
\end{equation*}
or, writing \((\boldsymbol{\delta}\mathcal{S})_n = (-1)^{n-1}
\mathrm{rank}\, \mathrm{im}\, \delta_n\),
\begin{equation}
\label{add-eq}
M\mathbf{H}A - M\mathbf{H}B + M\mathbf{H}C
  = \boldsymbol{\delta}\mathcal{S}/n \ .
\end{equation}

\begin{dfn}
We say that the short exact sequence~\(\mathcal{S}\) of~(\ref{ses}) is
\emph{weakly admissible} if the sequence
\(\boldsymbol{\delta}\mathcal{S}/n\) is H-null, that is, if
\(\boldsymbol{\delta}\mathcal{S} = \mathrm{\relax H}\hbox{-}o(n)\). We
say that \(\mathcal{S}\) is \emph{admissible} if if the sequence
\((\mathrm{rank}\, \mathrm{im}\, \delta_{n}/n)_{n \in \bN}\) is
H-null, that is, if \(|\boldsymbol{\delta}\mathcal{S}| =
\mathrm{H}\hbox{-}o(n)\).
\end{dfn}

Every admissible sequence is weakly admissible by
Corollary~\ref{H-on-implies-H-on}. We observe that any split short exact
sequence \(\mathcal{A} \colon \ 0 \to A \to A \oplus C \to C \to 0\)
of pre-admissible chain complexes is admissible; indeed, the
connecting homomomorphisms~\(\delta_n\) are trivial in this case
(equivalently, the homomorphisms \(H_{n-1}(f)\) are injective so that
\(\mathrm{im}\, \delta_{n} = \ker H_{n-1}(f) = 0\)) whence
\(\boldsymbol{\delta}\mathcal{A}\) is the zero-sequence. ---
Admissibility of short exact sequences is a void condition as long as
we are only dealing with admissible chain complexes. In fact, we have
the following result:

\begin{lem}
\label{lem:admissible-admissible}
If one of \(A\) and \(C\) is admissible, then \(\mathcal{S}\) is
admissible.
\end{lem}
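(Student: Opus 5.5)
The plan is to bound the rank of $\mathrm{im}\,\delta_n$ termwise by the rank of a homology module of $A$ or of $C$, and then apply Lemma~\ref{H-o-n-order}.

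The connecting homomorphism is $\delta_n \colon H_n C \to H_{n-1} A$. Its image $\mathrm{im}\,\delta_n$ is a quotient of $H_n C$ and a submodule of $H_{n-1} A$. Both modules are finitely generated, because $A$ and $C$ are pre-admissible. Rank is additive by Lemma~\ref{lem:rank_additive}, so it is monotone under passing to submodules and quotients. This gives the two inequalities
\begin{equation*}
  0 \leq \mathrm{rank}\,\mathrm{im}\,\delta_n \leq \mathrm{rank}\,H_{n-1}A
  \quad\text{and}\quad
  0 \leq \mathrm{rank}\,\mathrm{im}\,\delta_n \leq \mathrm{rank}\,H_{n}C \ .
\end{equation*}

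\emph{Case $A$ admissible.} By definition, $\big(\mathrm{rank}\,H_{n-1}A\big)_{n\in\bN} = |\mathbf{H}A|$ is $\mathrm{H}\hbox{-}o(n)$. Apply Lemma~\ref{H-o-n-order} with $\mathbf{b} = \big(\mathrm{rank}\,\mathrm{im}\,\delta_n\big)_n$, $\mathbf{a} = |\mathbf{H}A|$ and $f(n) = n$. This yields $|\boldsymbol{\delta}\mathcal{S}| = \mathrm{H}\hbox{-}o(n)$, so $\mathcal{S}$ is admissible.

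\emph{Case $C$ admissible.} Now the comparison sequence is $\big(\mathrm{rank}\,H_n C\big)_n$, which is $|\mathbf{H}C|[-1]$, the shift of $|\mathbf{H}C|$ by one index. So I must show that $\big(\mathrm{rank}\,H_nC/n\big)_n$ is H-null, given that $\big(\mathrm{rank}\,H_{n-1}C/n\big)_n$ is. Write $c_n = \mathrm{rank}\,H_{n-1}C$. The target sequence has $n$-th term $c_{n+1}/n = \frac{n+1}{n}\cdot\frac{c_{n+1}}{n+1}$.

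The sequence $\big(c_{n+1}/(n+1)\big)_n$ is a shift of $(c_n/n)_n$ with one term dropped, so it is H-null by Lemma~\ref{lem:shift_invariance}. The remaining issue is the factor $\frac{n+1}{n}\leq 2$. It is harmless because everything is non-negative: termwise
\begin{equation*}
  \frac{c_{n+1}}{n} \leq 2\,\frac{c_{n+1}}{n+1} \ ,
\end{equation*}
and $2\,\frac{c_{n+1}}{n+1}$ is H-null by linearity of $M^k$. Lemma~\ref{H-o-n-order}, applied with $f\equiv 1$, then gives the H-nullity of $\big(c_{n+1}/n\big)_n$. A second application of Lemma~\ref{H-o-n-order} to $\mathbf{b} = \big(\mathrm{rank}\,\mathrm{im}\,\delta_n\big)_n$ finishes the proof.

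The only step needing care is this index shift in the second case. It is handled by shift invariance (Lemma~\ref{lem:shift_invariance}) together with positivity (Lemma~\ref{H-o-n-order}). No estimate on the matrices $M^k$ beyond their non-negativity is required.
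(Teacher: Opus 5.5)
Your proof is correct and follows the paper's argument. You bound \(\mathrm{rank}\,\mathrm{im}\,\delta_n\) termwise by \(\mathrm{rank}\,H_{n-1}A\) (as a submodule) or by \(\mathrm{rank}\,H_nC\) (as a quotient), then invoke the comparison lemma for non-negative sequences. In the case \(C\) admissible, the paper only writes \(|\mathbf{H}C|_{n+1}\) and cites Lemma~\ref{H-null-implies-H-null}, whereas you handle the index shift explicitly: you drop a term via Lemma~\ref{lem:shift_invariance} and absorb the factor \(\frac{n+1}{n}\leq 2\) via Lemma~\ref{H-o-n-order}, which fills in a step the paper passes over.
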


\begin{proof}
If \(C\) is admissible then \(|\mathbf{H}C| = \mathrm{\relax
H}\hbox{-}o(n)\). As \(\mathrm{im}\, \delta_n\) is a homomorphic image
of~\(H_n(C)\) we have the inequality
\begin{equation*}
  \relax | \boldsymbol{\delta}\mathcal{S}|_n
  = \mathrm{rank}\, \mathrm{im}\, \delta_n \leq \mathrm{rank}\, H_{n} (C)
  = |\mathbf{H}C|_{n+1} \ ,
\end{equation*}
whence \(|\boldsymbol{\delta}\mathcal{S}| = \mathrm{\relax
H}\hbox{-}o(n)\) by Lemma~\ref{H-null-implies-H-null}. If \(A\) is
admissible we similarly use that \(\mathrm{im}\, \delta_n\) is a
submodule of~\(H_{n-1}(A)\) so that \(\mathrm{rank}\, \mathrm{im}\,
\delta_n \leq \mathrm{rank}\, H_{n-1} (A) = |\mathbf{H} A|_{n}\)
implies \(|\boldsymbol{\delta}\mathcal{S}| = \mathrm{\relax
H}\hbox{-}o(n)\).
\end{proof}

\medbreak

If \(\mathcal{S}\) is weakly admissible, we can pass to the H-limit in
equation~(\ref{add-eq}) and note that the right-hand side vanishes, by the
very definition of weak admissibility. In the limit, the equation
turns into \(\cH(A) - \cH(B) + \cH(C) = 0\). We have proved:

\begin{thm}
\label{additive}
The infinite Euler characteristic \(\cH\) is additive on weakly
admissible short exact sequences of pre-admissible chain
complexes. That is, given the weakly admissible exact
sequence~\(\mathcal{S}\) of~(\ref{ses}), we have the relation \(\cH(A) -
\cH(B) + \cH(C) = 0\). \qed
\end{thm}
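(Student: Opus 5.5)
The plan is to pass to the H-limit in the identity~(\ref{add-eq}), which holds termwise for every short exact sequence of pre-admissible complexes. The key point is that H-limits are linear and compatible with the matrix~\(M\). By hypothesis the sequences \(\mathbf{H}A\), \(\mathbf{H}B\), \(\mathbf{H}C\) are H-convergent. So there is a single \(k\) (the maximum of the three exponents, using the remark that \(M^k\)-convergence implies \(M^m\)-convergence for \(m \geq k\)) such that \(M^k\mathbf{H}A\), \(M^k\mathbf{H}B\), \(M^k\mathbf{H}C\) all converge, with limits \(\cH(A)\), \(\cH(B)\), \(\cH(C)\).

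Next, apply the matrix \(M^{k}\) to both sides of~(\ref{add-eq}). This uses linearity of matrix multiplication; each entry involves only finitely many terms since \(M\) is lower triangular. We obtain
\begin{equation*}
  M^{k+1}\mathbf{H}A - M^{k+1}\mathbf{H}B + M^{k+1}\mathbf{H}C = M^{k}\big(\boldsymbol{\delta}\mathcal{S}/n\big) .
\end{equation*}
The left-hand side converges to \(\cH(A) - \cH(B) + \cH(C)\), again by the fact that \(M^{k}\)-convergence implies \(M^{k+1}\)-convergence with the same limit.

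For the right-hand side, weak admissibility says that \(\boldsymbol{\delta}\mathcal{S}/n\) is H-null, so \(M^{j}(\boldsymbol{\delta}\mathcal{S}/n)\to 0\) for some \(j\). Enlarging \(k\) to \(\max(k,j)\) at the outset and using the same monotonicity remark, the right-hand side converges to~\(0\). Comparing limits gives \(\cH(A)-\cH(B)+\cH(C)=0\).

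The only step needing care is choosing one common exponent so that all four sequences converge under the same power of~\(M\). This is handled by the stated fact that \(M^k\)-convergence persists for larger exponents with the same limit. There is no genuine obstacle beyond this bookkeeping.
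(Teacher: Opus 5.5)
Your proof is correct and takes the same approach as the paper: apply a sufficiently high power of \(M\) to the identity~(\ref{add-eq}) and pass to the limit, with weak admissibility making the right-hand side vanish. You have just written out the bookkeeping that the paper compresses into ``pass to the H-limit'', namely choosing a common exponent and noting that the extra factor of \(M\) on the left does not change the limits.
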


For later use, we record the following result:

\begin{lem}
\label{in_C} Let \(\mathcal{S} \colon \ 0 \to A \to B \to C \to 0\) be a
short exact sequence of chain complexes of \(R\)-modules.
\begin{enumerate}[{\rm (a)}]
\item \label{in_C_1} If two of the three complexes \(A\), \(B\) and~\(C\)
have finitely generated homology modules, then so does the third.
\item \label{in_C_2} Suppose that \(A\), \(B\) and~\(C\) vanish in negative
chain degrees.  If two of the chain complexes \(A\), \(B\)
and~\(C\) are admissible (that is, are objects of~\(\C{}\)),
then so is the third.
\end{enumerate}
\end{lem}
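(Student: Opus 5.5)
For part~(\ref{in_C_1}) I will use the long exact homology sequence~(\ref{les}) together with the fact that \(R\) is Noetherian, so submodules and quotients of finitely generated modules are finitely generated, and extensions of finitely generated modules are finitely generated. If \(A\) and \(C\) have finitely generated homology, then \(H_nB\) sits in an exact sequence \(H_nA \to H_nB \to H_nC\). Hence \(H_nB\) is an extension of a submodule of \(H_nC\) by a quotient of \(H_nA\), and so is finitely generated. If \(A\) and \(B\) are known, then \(H_nC\) is an extension of \(\ker(H_{n-1}A\to H_{n-1}B)\) (a submodule of \(H_{n-1}A\)) by a quotient of \(H_nB\). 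If \(B\) and \(C\) are known, \(H_nA\) is an extension of a submodule of \(H_nB\) by a quotient of \(H_{n+1}C\).

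For part~(\ref{in_C_2}), condition (a) is given and condition (b) follows from part~(\ref{in_C_1}). It remains to show that the third complex \(X\) satisfies \(\mathbf{H}X\) H-convergent and \(|\mathbf{H}X| = \mathrm{H}\hbox{-}o(n)\).

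\textbf{Absolute growth.} I first bound ranks using the same exactness. With \(a_n = \mathrm{rank}\,H_{n-1}A\), and \(b_n, c_n\) defined similarly, the extension arguments above combined with Lemma~\ref{lem:rank_additive} give:
\begin{itemize}
\item \(b_n \le a_n + c_n\),
\item \(c_n \le b_n + a_{n-1}\),
\item \(a_n \le b_n + c_{n+1}\).
\end{itemize}
By Lemma~\ref{H-o-n-order} it suffices to show the right-hand sides, divided by \(n\), are H-null. Sums of H-null sequences are H-null by linearity of \(M^k\). The shifted terms need Lemma~\ref{lem:shift_invariance}: \(a_{n-1}/n\) is the shift of \(a_n/(n+1)\), which is bounded by \(a_n/n\) and hence H-null by Lemma~\ref{H-o-n-order}. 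Similarly \(c_{n+1}/n\) is a shift of \(c_n/(n-1)\le 2c_n/n\) for \(n\ge 2\), with finitely many terms modified. This yields \(|\mathbf{H}X| = \mathrm{H}\hbox{-}o(n)\).

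\textbf{H-convergence.} By Lemma~\ref{lem:admissible-admissible}, \(\mathcal{S}\) is admissible, hence weakly admissible, so \(\boldsymbol{\delta}\mathcal{S}/n\) is H-null. Equation~(\ref{add-eq}) then expresses \(M\mathbf{H}X\) as \(\pm\) a combination of \(M\mathbf{H}\) of the two known complexes plus \(\pm\boldsymbol{\delta}\mathcal{S}/n\). Both are H-convergent, so \(M\mathbf{H}X\) is H-convergent. Since H-convergence of \(M\mathbf{a}\) (\(M^k\)-convergence for \(M^{k+1}\)) is equivalent to H-convergence of \(\mathbf{a}\), it follows that \(\mathbf{H}X\) is H-convergent.

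The main obstacle I expect is the bookkeeping with index shifts and weight changes \(n\) versus \(n\pm1\) in the growth bounds. I plan to handle these via Lemma~\ref{lem:shift_invariance} and Lemma~\ref{H-o-n-order} as indicated.
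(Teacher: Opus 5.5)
Your proof is correct. Part~(a) is the same argument as the paper's, but part~(b) takes a somewhat different route for the growth condition, and it checks one thing the paper's proof leaves out.

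\textbf{Growth condition.} The paper splits the long exact sequence into five-term exact sequences
$0 \to \mathrm{im}\,\delta_n \to H_{n-1}A \to H_{n-1}B \to H_{n-1}C \to \mathrm{im}\,\delta_{n-1} \to 0$.
This gives the exact identity
$|\boldsymbol{\delta}\mathcal{S}|_n + |\boldsymbol{\delta}\mathcal{S}|_{n-1} = |\mathbf{H}A|_n - |\mathbf{H}B|_n + |\mathbf{H}C|_n$.
The left side is $\mathrm{H}\hbox{-}o(n)$ by Lemma~\ref{lem:admissible-admissible} and a shift, and the unknown summand is then solved for linearly.

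You instead bound the unknown rank from above by ranks of the two known complexes, using exactness at a single spot. You then conclude by monotonicity (Lemma~\ref{H-o-n-order}) and shift invariance.

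The paper's identity handles all three cases at once. Your version does not need the bound on the connecting homomorphisms for this step. It also makes explicit the weight change $n$ versus $n\pm 1$ under shifting, which the paper passes over: its term $|\boldsymbol{\delta}\mathcal{S}|_{n-1}/n$ needs exactly the comparison you carry out.

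\textbf{H-convergence.} Your second paragraph uses (\ref{add-eq}), weak admissibility of $\mathcal{S}$, and the equivalence of H-convergence of $\mathbf{a}$ and of $M\mathbf{a}$. This verifies condition~(c) of pre-admissibility for the third complex. The paper's proof only checks $|\mathbf{H}X| = \mathrm{H}\hbox{-}o(n)$ and never addresses H-convergence of $\mathbf{H}X$. Since the Waldhausen-structure proof relies on this lemma for full admissibility, your argument is the more complete one on this point.
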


\begin{proof}
The first part follows immediately from the long exact homology
sequence
\begin{equation*}
  \ldots \to H_{n+1}(C) \to H_n(A) \to H_n(B) \to H_n(C) \to
  H_{n-1}(A) \to \ldots
\end{equation*}
as any \(R\)-modules "sandwiched" between two finitely generated
\(R\)-modules in a long exact sequence must be finitely generated
itself.

For the second part, note that Lemma~\ref{lem:admissible-admissible}
guarantees that \(|\boldsymbol{\delta}\mathcal{S}| = \mathrm{\relax
H}\hbox{-}o(n)\). By splitting the long exact sequence~\ref{les} we obtain
exact sequences
\begin{equation*}
  0 \to \mathrm{im}\, \delta_{n}
  \to H_{n-1}(A) \to H_{n-1}(B) \to H_{n-1}(C)
  \to \mathrm{im}\, \delta_{n-1} \to 0
\end{equation*}
(where \(\delta_0\) is the zero map) which implies
\begin{equation*}
  \big|\boldsymbol{\delta}\mathcal{S}[-1] \big| +
  \big|\boldsymbol{\delta}\mathcal{S}\big| =
  \big| \mathbf{H}A  \big| -
  \big| \mathbf{H}B  \big| +
  \big| \mathbf{H}C  \big| \ .
\end{equation*}
The left-hand side is \(\mathrm{\relax H}\hbox{-}o(n)\) by hypothesis
and Lemma~\ref{lem:shift_invariance}, and so are two of the summands on the
right-hand side. Hence the third summand must be \(\mathrm{\relax
H}\hbox{-}o(n)\) as well, as required.
\end{proof}

\begin{thm}
Suppose that \(X\) is a CW-complex, and that \(A \subseteq X\) is a
subcomplex of~\(X\). Suppose further that two of the three spaces
\(A\), \(X\) and~\(X/A\) are Hölder admissible in the sense that their
cellular chain complexes are Hölder admissible
(Definition~\ref{def:adm}). Then all three spaces are Hölder
admissible, and \(\cH(X/A; R) = \cH(X;R) - \cH(A;R)\).
\end{thm}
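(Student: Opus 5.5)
The plan is to reduce the statement to Lemma~\ref{in_C} and Theorem~\ref{additive}, applied to a short exact sequence of cellular chain complexes. Write \(C(A)\), \(C(X)\) and \(C(X/A)\) for the cellular chain complexes with coefficients in~\(R\). The inclusion \(A \subseteq X\) is cellular, so it induces an injective chain map \(C(A) \to C(X)\); in each degree this is the inclusion of the free module on the \(n\)-cells of~\(A\) into the free module on the \(n\)-cells of~\(X\). Its cokernel in degree~\(n\) is free on the \(n\)-cells of~\(X\) not lying in~\(A\). These cells correspond bijectively to the \(n\)-cells of~\(X/A\) other than the base point~\(*\).

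The first point needing care is that \(C(X/A)\) is not literally this cokernel \(C(X)/C(A)\): in degree~\(0\) the base point contributes an extra copy of~\(R\). I would handle this in one of two ways. The first option is to identify \(C(X)/C(A)\) with the reduced cellular complex \(\tilde C(X/A)\), whose homology is \(\tilde H_\ast(X/A) \cong H_\ast(X,A)\); this is the standard identification, and the differentials match because the attaching maps in \(X/A\) are the composites of those in \(X\) with the quotient map. Then \(C(X/A) \cong \tilde C(X/A) \oplus R[0]\), with \(R[0]\) concentrated in degree~\(0\). The second option is to avoid the issue by using the short exact sequence
\begin{equation*}
0 \to C(A) \to C(X) \to C(X)/C(A) \to 0
\end{equation*}
and observing that \(\mathbf{H}\big(C(X/A)\big)\) differs from \(\mathbf{H}\big(C(X)/C(A)\big)\) in at most the first entry. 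For \(A=\emptyset\) there is a similar bookkeeping issue with the added base point, and I would treat it the same way. By Lemma~\ref{lem:shift_invariance}, changing finitely many terms affects neither admissibility (the condition that \(|\mathbf{H}|/n\) is H-null) nor the value of~\(\cH\). Hence \(X/A\) is Hölder admissible if and only if \(C(X)/C(A)\) is admissible, and the two have the same~\(\cH\). The same argument works for finite generation of homology.

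With this in place, all three complexes vanish in negative degrees, so Lemma~\ref{in_C} applies to the displayed short exact sequence. Part~(\ref{in_C_1}) gives finitely generated homology for the third complex, and part~(\ref{in_C_2}) gives admissibility of the third complex once two of them are admissible. Next, by Lemma~\ref{lem:admissible-admissible} the sequence is admissible, since \(A\) or the quotient is admissible; by Corollary~\ref{H-on-implies-H-on} it is therefore weakly admissible. Theorem~\ref{additive} then gives \(\cH(A) - \cH(X) + \cH\big(C(X)/C(A)\big) = 0\). Combined with the previous paragraph, this yields \(\cH(X/A;R) = \cH(X;R) - \cH(A;R)\).

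One more point must be checked: \(\cH\) of a space was defined via the singular complex in Definition~\ref{def:Euler_char_X}, while here we work with the cellular one. This is harmless because the two complexes have isomorphic homology, and \(\cH\) and admissibility depend only on the ranks of the homology modules. I expect the main obstacle to be the degree-zero base-point correction, including the degenerate case \(A = \emptyset\). It is cleanly absorbed by the finite-modification invariance of Lemma~\ref{lem:shift_invariance}.
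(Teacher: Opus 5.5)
Your proposal is correct and follows the paper's own route. You identify \(C(X)/C(A)\) with the reduced cellular complex of \(X/A\), apply Lemma~\ref{in_C}~(\ref{in_C_2}) to the short exact sequence \(0 \to C(A) \to C(X) \to C(X)/C(A) \to 0\), and read off the formula from Theorem~\ref{additive}. Your extra bookkeeping makes explicit points that the paper's two-line proof leaves implicit: the degree-zero base-point correction via Lemma~\ref{lem:shift_invariance}, the check of weak admissibility via Lemma~\ref{lem:admissible-admissible} and Corollary~\ref{H-on-implies-H-on}, and the singular-versus-cellular remark.
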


\begin{proof}
The (reduced) cellular chain complex of~\(X/A\) is isomorphic to the
quotient of the cellular chain complex of~\(X\) by its subcomplex
formed by the cellular chain complex of~\(A\). Admissibility thus
follows immediately from Lemma~\ref{in_C}~(\ref{in_C_2}), and the formula for the
infinite Euler characteristic is a consequence of Theorem~\ref{additive}.
\end{proof}

\section{Algebraic \(K\)-theory}
\label{sec:orgc82a9bc}
\label{sec:Waldhausen}

We equip the category~\(\C{}\) of admissible chain complexes
with the structure of a category with cofibrations and weak
equivalences in the sense of Waldhausen \cite{zbMATH03927168}, as follows:

\begin{itemize}
\item A \emph{cofibration} is an injective chain map \(f \colon C \to D\)
in~\(\C{}\). The subcategory of \(\C{}\) consisting of
all objects of~\(\C{}\) and all cofibrations will be denoted
\(\mathrm{co}\C{}\). As usual, cofibrations will be written
with the symbol "\(\tikzcd {} \rcof & {} \endtikzcd\)".
\item A \emph{weak equivalence} is a chain map in~\(\C{}\) that is a
quasi-iso\-mor\-phism (that is, a chain map inducing isomorphisms on
all homology modules). The subcategory of~\(\C{}\) consisting of all
objects of~\(\C{}\) and all weak equivalences will be denoted
\(\mathrm{w}\C{}\). As usual, weak equivalences will be written with
the symbol "\(\tikzcd {} \rweq & {} \endtikzcd\)".
\end{itemize}

\begin{thm}
\begin{enumerate}[{\rm (a)}]
\item \label{Waldhausen-structure} With the definitions above, \(\C{}\) is a
category with cofibrations and weak equivalences in the sense of
Waldhausen \cite{zbMATH03927168} satisfying the
saturation axiom and the extension axiom.
\item \label{cylinder} The usual mapping cylinder construction for chain
complexes provides a cylinder functor satisfying the cylinder
axiom.
\end{enumerate}
\end{thm}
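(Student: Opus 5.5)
We verify Waldhausen's axioms directly, using Lemma~\ref{in_C} as the main tool. First note that \(\C{}\) has the zero complex as a zero object; it is admissible because its sequence \(\mathbf{H}0\) vanishes. For every \(C\) the map \(0 \to C\) is injective, hence a cofibration. Isomorphisms are injective quasi-isomorphisms, so they are both cofibrations and weak equivalences. Cofibrations and weak equivalences are each closed under composition.

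The key axiom is that pushouts along cofibrations exist in \(\C{}\) and that cofibrations are stable under cobase change. Let \(i \colon A \to B\) be a cofibration and \(f \colon A \to C\) any chain map between admissible complexes. Form the pushout \(P = B \cup_A C\) in the category of chain complexes of \(R\)-modules; in each degree it is the module pushout. Since \(i\) is injective, the induced map \(C \to P\) is injective, and there is a short exact sequence \(0 \to C \to P \to B/A \to 0\). The sequence \(0 \to A \to B \to B/A \to 0\) shows, by Lemma~\ref{in_C}, that \(B/A\) is admissible: part~(\ref{in_C_1}) gives finitely generated homology, and part~(\ref{in_C_2}) gives admissibility. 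The same lemma applied to \(0 \to C \to P \to B/A \to 0\) then shows that \(P\) is admissible. All these complexes vanish in negative degrees, since quotients and pushouts of complexes concentrated in non-negative degrees are again concentrated there.

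There is a subtlety here. Lemma~\ref{in_C}~(\ref{in_C_2}) presupposes that the complexes lie in \(\Cp{}\), that is, that \(\mathbf{H}\) of the third complex is H-convergent, and not merely that its ranks are \(\mathrm{H}\hbox{-}o(n)\). This is the step I expect to be the main obstacle, and I will check it explicitly via equation~(\ref{add-eq}). The sequence \(M\mathbf{H}\) of the third complex equals a combination of two \(M\)-transformed H-convergent sequences plus \(\pm\boldsymbol{\delta}\mathcal{S}/n\). The term \(\boldsymbol{\delta}\mathcal{S}/n\) is H-null by Lemma~\ref{lem:admissible-admissible} and Corollary~\ref{H-on-implies-H-on}. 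Hence \(M\mathbf{H}\) of the third complex is H-convergent. Convergence of \(M^k(M\mathbf{a})\) is convergence of \(M^{k+1}\mathbf{a}\), so the third \(\mathbf{H}\) is itself H-convergent. If the paper's lemma already covers this, I will simply cite it.

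For the weak equivalence axioms, the gluing lemma is handled as follows. Suppose we have a map of pushout diagrams \(B \leftarrowtail A \to C\) to \(B' \leftarrowtail A' \to C'\) whose components are quasi-isomorphisms. Compare the two short exact sequences \(0 \to C \to P \to B/A \to 0\) and their primed versions. The map \(B/A \to B'/A'\) is a quasi-isomorphism by the five lemma on the long exact sequences of \(0 \to A \to B \to B/A \to 0\). A second application of the five lemma shows that \(P \to P'\) is a quasi-isomorphism.

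Saturation, that is two-out-of-three for weak equivalences, holds because homology is a functor. For the extension axiom, consider a map of cofibration sequences \(A \rightarrowtail B \to B/A\) to \(A' \rightarrowtail B' \to B'/A'\). If the outer maps are quasi-isomorphisms, the five lemma applied to the long exact homology sequences shows that the middle map is one too.

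For part~(\ref{cylinder}), take \(f \colon A \to B\) in \(\C{}\) and use the standard mapping cylinder \(T(f)\), with \(T(f)_n = A_n \oplus A_{n-1} \oplus B_n\). It comes with the front inclusion \(A \oplus B \to T(f)\), which is injective and hence a cofibration, and the projection \(p \colon T(f) \to B\), a chain homotopy equivalence with section \(B \to T(f)\). Thus \(T(f)\) is quasi-isomorphic to \(B\) and therefore admissible. It also vanishes in negative degrees, because \(A_{-1} = 0\). Here admissibility only depends on homology, and \(\mathbf{H}\) is quasi-isomorphism invariant.

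The required functoriality in the arrow \(f\), and the preservation of cofibrations and weak equivalences, are standard. On commutative squares the cylinder construction is degreewise a direct sum of the maps involved. It therefore preserves injectivity, and by the five lemma it preserves quasi-isomorphisms. The cylinder axiom itself (that \(p\) is a weak equivalence) is the homotopy equivalence noted above.
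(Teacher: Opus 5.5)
Your proposal is correct and follows essentially the same route as the paper: Cof~3 goes through the two short exact sequences $0\to A\to B\to B/A\to 0$ and $0\to C\to P\to B/A\to 0$ together with Lemma~\ref{in_C}, the gluing and extension axioms come from the five lemma, and the mapping cylinder is admissible by quasi-isomorphism invariance of $\mathbf{H}$. Your explicit check via equation~(\ref{add-eq}) that $\mathbf{H}$ of the third complex is actually H-convergent is worthwhile, because the paper's proof of Lemma~\ref{in_C}~(\ref{in_C_2}) only establishes the $\mathrm{H}\hbox{-}o(n)$ growth condition and leaves pre-admissibility of the third complex implicit.
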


\begin{proof}
(\ref{Waldhausen-structure}) Axioms Cof~1 (isomorphisms are cofibrations)
and Cof~2 (all objects are cofibrant) are trivial. Axiom Cof~3
requires that for each cofibration \(f \colon \tikzcd A \rcof & B
\endtikzcd\) and each map \(A \to C\) in~\(\C{}\), the pushout
\begin{equation*}
  \begin{tikzcd}
    A \rcofp[f] \dar{} \po & B \dar{} \\
    C \rar{F} & D
  \end{tikzcd}
\end{equation*}
exists in~\(\C{}\), and that \(F\) is a cofibration. We can certainly
form the pushout in the category of chain complexes, and by taking
cofibres (\emph{i.e.}, cokernels) of \(f\) and~\(F\) we obtain a
commutative diagram with exact rows:
\begin{equation*}
  \begin{tikzcd}
    0 \rar{} & A \rar{f} \dar{g} \po
      & B \rar{} \dar{}
      & \mathrm{coker}\,f \rar{} \dar{} & 0 \\
    & C \rar{F} & D \rar{} & \mathrm{coker}\,F \rar{} & 0
  \end{tikzcd}
\end{equation*}
By general properties of pushouts, the right-hand vertical map is an
isomorphism. Moreover, a pushout of an injective map of chain
complexes is injective again. So the diagram can be re-drawn as a map
between two short exact sequences:
\begin{equation*}
  \begin{tikzcd}
    0 \rar{} & A \rar{f} \dar{g} \po
      & B \rar{} \dar{}
      & \mathrm{coker}\,f \rar{} \dar{\cong} & 0 \\
    0 \rar{} & C \rar{F} & D \rar{} & \mathrm{coker}\,F \rar{} & 0
  \end{tikzcd}
\end{equation*}
As \(A\) and \(B\) are admissible so is \(\mathrm{coker}\,f\), by
Lemma~\ref{in_C}. Hence the isomorphic complex \(\mathrm{coker}\,F\) is
admissible. Since \(C\) is admissible by hypothesis, Lemma~\ref{in_C}
asserts that \(D = A \cup_B C\) is admissible.

Axiom Weq~1 (all isomorphisms are weak equivalences) is trivial, while
axiom Weq~2 (the gluing lemma) is known to hold in the category of
chain complexes, hence holds in particular in the present case. It is
well-known that the weak equivalences (quasi-isomorphisms) satisfy the
saturation and extension axioms (the latter being a consequence of the
five lemma, applied to long exact homology sequences).

\medbreak

(\ref{cylinder}) The mapping cylinder \(T(f)\) of a map \(f \colon C \to D\)
of chain complexes is chain homotopy equivalent to~\(D\). Thus
\(\mathbf{H}T(f) = \mathbf{H}D\), and \(T(f)\) is admissible if (and
only if) \(D\) is. Thus \(T(f)\) furnishes \(\C{}\) with a cylinder
functor satisfying the cylinder axiom since the construction is
well-known to provide such a cylinder functor on the category of all
positive chain complexes (with no admissibility assumptions).
\end{proof}

\begin{dfn}
The \emph{Hölder method \(K\)-theory of~\(R\)} is defined as the algebraic
\(K\)-theory of~\(\C\). That is, we have the \(K\)-theory
space
\begin{equation*}
  K^{\text{H}}(R) = K(\C{}) =
  \Omega \big| \mathrm{w}\mathcal{S}_{\bullet} \C{} \big|
\end{equation*}
and the \(K\)-groups
\begin{equation*}
  K^{\text{\upshape H}}_{n}(R) = K_n(\C{}) = \pi_n K(\C{}) \qquad \text{(\(n \geq 0\)).}
\end{equation*}
\end{dfn}

The group \(K^{\text{H}}_0(R)\) is the abelian group generated by
symbols \([A]\), for each object \(A \in \C{}\), subject to the
relations
\begin{itemize}
\item \([A] = [A']\) whenever there exists a weak equivalence \(A \to A'\),
\item \([B] = [A] + [B/A]\) for each cofibration \(f \colon \tikzcd A \rcof & B
  \endtikzcd\), where \(B/A = \mathrm{coker}\,(f)\).
\end{itemize}

\begin{thm}
The function \(\cH\) induces a well-defined surjective group
homomorphism \(K^{\text{\upshape H}}_0(R) \to \bR\), thus
\(K^{\text{\upshape H}}_0(R)\) is an uncountable group.
\end{thm}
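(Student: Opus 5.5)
The plan is to use the universal property of \(K^{\text{\upshape H}}_0(R)\) as the abelian group generated by the symbols \([A]\) subject to the two listed relations. Consider the free abelian group on the objects of \(\C{}\) and the homomorphism to \(\bR\) sending each generator \([A]\) to \(\cH(A)\); this makes sense because every admissible complex is in particular pre-admissible, so \(\cH(A)\) is defined. To show it factors through \(K^{\text{\upshape H}}_0(R)\), I would check the two families of relations separately.

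\emph{Weak equivalences.} If \(A \to A'\) is a quasi-isomorphism, then \(H_n A \cong H_n A'\) for all \(n\). Hence \(\mathbf{H}A = \mathbf{H}A'\) as sequences, and \(\cH(A) = \cH(A')\); this is the quasi-isomorphism invariance already noted after the definition of \(\cH\).

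\emph{Cofibrations.} Given a cofibration \(f\colon A \rightarrowtail B\) in \(\C{}\), we get a short exact sequence \(\mathcal{S}\colon 0 \to A \to B \to B/A \to 0\). By Lemma~\ref{in_C}~(\ref{in_C_2}), the cokernel \(B/A\) is admissible; alternatively, by Lemma~\ref{lem:admissible-admissible} it suffices that \(A\) is admissible. In either case Lemma~\ref{lem:admissible-admissible} shows \(\mathcal{S}\) is admissible, hence weakly admissible by Corollary~\ref{H-on-implies-H-on}. Theorem~\ref{additive} then gives \(\cH(A) - \cH(B) + \cH(B/A) = 0\), which is exactly the relation \([B] = [A] + [B/A]\). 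So \(\cH\) descends to a well-defined group homomorphism \(K^{\text{\upshape H}}_0(R) \to \bR\).

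\emph{Surjectivity and uncountability.} Surjectivity follows from Theorem~\ref{thm:all_R}: for each \(r \in \bR\) there is \(C \in \C{}\) with \(\cH(C) = r\), so \([C]\) maps to \(r\). Finally, a group admitting a surjection onto the uncountable set \(\bR\) must itself be uncountable.

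Every step reduces to results already established. The only point needing care is that the additivity theorem requires weak admissibility of the sequence, and this is exactly what Lemma~\ref{lem:admissible-admissible} supplies because \(A\) lies in \(\C{}\).
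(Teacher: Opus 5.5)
Your proposal is correct and follows the paper's proof. Quasi-isomorphism invariance of \(\cH\) kills the weak-equivalence relations, Theorem~\ref{additive} kills the cofibration relations, and Theorem~\ref{thm:all_R} gives surjectivity onto \(\bR\); you also make explicit a step the paper leaves implicit, namely that weak admissibility of the cofibration sequence follows from Lemma~\ref{lem:admissible-admissible} and Corollary~\ref{H-on-implies-H-on}.
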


\begin{proof}
Note that \(\cH(A) = \cH(A')\) whenever the complexes \(A\) and \(A'\)
are quasi-isomorphic, because the definition of~\(\cH\) only depends
on the ranks of the homology modules of the chain complexes
involved. In view of Theorems~\ref{additive} and~\ref{thm:all_R}, the statement
follows immediately from the above presentation of
\(K^{\text{\upshape H}}_0(R)\).
\end{proof}

\section{Chain complexes with finitely generated chain modules}
\label{sec:orgc847c71}
\label{sec:finite_model}

Let \(\C{}_\mathrm{f}\) denote the full subcategory
of~\(\C{}\) consisting of admissible chain complexes with
finitely generated chain modules.

\begin{thm}
\label{thm:finite_same_K}
\begin{enumerate}[{\rm (a)}]
\item The category \(\C{}_\mathrm{f}\) is a subcategory with
cofibrations and weak equivalences of the category~\(\C{}\),
and the cylinder functor on~\(\C{}\) restricts to a cylinder
functor for~\(\C{}_\mathrm{f}\).
\item The inclusion functor \(\iota \colon \C{}_\mathrm{f}
   \xrightarrow{\subset} \C{}\) induces a homotopy
equivalence
\begin{equation*}
  {}|w \mathcal{S}_{\bullet} \C{}_\mathrm{f} |
  \xrightarrow{\sim}
  {}|w \mathcal{S}_{\bullet} \C{} |
\end{equation*}
and hence an isomorphism on \(K\)-groups.
\end{enumerate}
\end{thm}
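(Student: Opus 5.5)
Part (a) is a closure check, and part (b) is an application of Waldhausen's approximation theorem.

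For (a), note first that \(\C{}_\mathrm{f}\) contains the zero complex and is closed under isomorphisms. Given a cofibration \(A \rightarrowtail B\) and a map \(A \to C\) in \(\C{}_\mathrm{f}\), the pushout \(D\) formed in \(\C{}\) has chain modules \(D_n = (B_n \oplus C_n)/A_n\). These are quotients of finitely generated modules, hence finitely generated, so \(D \in \C{}_\mathrm{f}\). Cofibrations in \(\C{}_\mathrm{f}\) are declared to be the injective chain maps whose cokernel again lies in \(\C{}_\mathrm{f}\). Since \(R\) is Noetherian, the cokernel of a map between complexes of finitely generated modules is levelwise finitely generated, and by Lemma~\ref{in_C} it is admissible, so this condition holds automatically. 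Weak equivalences are the restricted quasi-isomorphisms, and the remaining Waldhausen axioms are inherited from \(\C{}\). For the cylinder functor: if \(f \colon C \to D\) has \(C, D\) levelwise finitely generated, then \(T(f)_n = C_n \oplus C_{n-1} \oplus D_n\) is finitely generated. Also \(T(f)\) is admissible because it is homotopy equivalent to \(D\). The front and back inclusions have cokernels \(D\), \(C[1]\) or \(C\) (up to the usual identifications), which are all in \(\C{}_\mathrm{f}\) by Proposition~\ref{prop:minus}. Hence the cylinder restricts.

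For (b) I will verify the hypotheses of Waldhausen's approximation theorem for \(\iota\). Both categories satisfy saturation and have cylinder functors satisfying the cylinder axiom, by (a) and the earlier theorem. The first approximation property holds because \(\iota\) is full and both categories use quasi-isomorphisms, so \(\iota\) reflects weak equivalences. The second approximation property is the substance. Given \(A \in \C{}_\mathrm{f}\), \(X \in \C{}\) and a map \(f \colon A \to X\), we must find a cofibration \(A \rightarrowtail A'\) in \(\C{}_\mathrm{f}\) and a quasi-isomorphism \(A' \to X\) extending \(f\). After replacing \(X\) by the mapping cylinder \(T(f)\), we may take \(f\) injective. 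We then build \(A' \supseteq A\) by induction on degree, as a subcomplex attaching finitely generated free modules. In degree \(n\), we adjoin finitely many generators mapping to cycles of \(X\) so that \(H_n(A') \to H_n(X)\) becomes surjective; this uses that \(H_nX\) is finitely generated. We also adjoin finitely many generators in degree \(n+1\) whose boundaries kill the kernel of \(H_n(A') \to H_n(X)\). This kernel is finitely generated because \(R\) is Noetherian and \(A'_n\) is finitely generated, so its cycle module is too. The maps are extended by choosing preimages in \(X\), which exist because the killed classes map to boundaries.

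This is the standard cell-by-cell construction of a finite-type resolution, and it requires only finitely many new generators in each degree. The result is a complex \(A'\) with finitely generated chain modules and a quasi-isomorphism \(A' \to X\). Then \(A'\) has the same homology ranks as \(X\), so it is admissible. The inclusion \(A \rightarrowtail A'\) is injective with cokernel in \(\C{}_\mathrm{f}\), by Lemma~\ref{in_C} and levelwise finite generation.

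The main obstacle is the inductive step. Each stage must stay finitely generated while both surjectivity and injectivity on \(H_n\) are achieved, and later stages must not disturb lower homology. Attaching in degree \(n+1\) changes only \(H_n\) and \(H_{n+1}\), and the degree \(n+1\) stage then repairs \(H_{n+1}\), so the induction is consistent. Noetherianity of \(R\) is what makes the kernels finitely generated. The approximation theorem then gives the homotopy equivalence \(|w\mathcal{S}_\bullet \C{}_\mathrm{f}| \simeq |w\mathcal{S}_\bullet \C{}|\), and hence the isomorphism on \(K\)-groups.
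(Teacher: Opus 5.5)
Your proof is correct. For part~(a) you supply the closure checks that the paper simply calls immediate. For part~(b), however, you take a different route from the paper.

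The paper never attaches free generators. It builds \(D'\) as an exhaustive union of finitely generated \emph{subcomplexes} \(F_kD'\) of the target \(D\). At stage \(k+1\) it adds three things: the image \(f(C_{k+1})\), a finitely generated module of cycles \(Y\) surjecting onto \(H_{k+1}D\), and a finitely generated \(Z \subseteq D_{k+1}\) whose boundaries kill the kernel of \(H_kE' \to H_kD\). This gives a factorisation \(f = i \circ f'\) with \(i \colon D' \hookrightarrow D\) a quasi-isomorphism, but \(f'\) need not be injective. So what the paper really verifies is the weakened form of App~2 that suffices when a cylinder functor is available: one implicitly replaces \(f'\) by \(C \rightarrowtail T(f') \xrightarrow{\sim} D'\).

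Your relative cell construction verifies Waldhausen's App~2 on the nose, since \(A \rightarrowtail A'\) is injective with levelwise finite free cokernel by construction. The price is that you must choose lifts in \(X\) to define the map on the new generators. In the paper, by contrast, the quasi-isomorphism is simply the inclusion of a subcomplex, and the image of \(f\) has to be swept up at each stage instead. Both arguments rest on the same two inputs: finite generation of the homology of the target, and Noetherianity of \(R\) to keep the relevant kernels finitely generated.

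One small point: once you attach free generators to \(A\), the map \(A \to A'\) is injective whatever \(f\) is. Your preliminary replacement of \(X\) by \(T(f)\) is therefore superfluous. It would only be needed if \(A'\) were built as a subcomplex of the target, as in the paper.
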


\begin{proof}
The first part is immediate. --- The second part makes use of the
Approximation Theorem \cite[, Theorem 1.6.7]{zbMATH03927168}. It is enough to
verify the following assertion:
\begin{quote}
  For any \(C \in \C{}_\mathrm{f}\), any \(D \in
  \C{}\) and any chain map \(f \colon C \to D\) there exists a
  subcomplex \(D' \subseteq D\) together with a chain map \(f' \colon C \to
  D'\) such that
  \begin{itemize}
    \item \(D'\) is an object of \(\C{}_\mathrm{f}\),
    \item the inclusion map \(i \colon D' \to D\) is a
      quasi-isomorphism,
    \item the factorisation \(f = i \circ f'\) holds.
  \end{itemize}
\end{quote}
In fact, we will inductively construct the stages of an ascending exhaustive
filtration
\begin{equation*}
  0 = F_{-1} D' \subset F_0 D' \subset F_1 D' \subset \ldots \subset
  D' = \bigcup_{k \geq 0} F_k D'
\end{equation*}
of subcomplexes of~\(D\), together with compatible chain maps
\begin{equation*}
  f'_k \colon C_{\leq k} \to F_k D'
\end{equation*}
such that
\begin{itemize}
\item each \(F_k D' \subseteq D\) consists of finitely generated modules,
\item \((F_k D')_\ell = 0\) for \(\ell > k\),
\item the inclusion \(i_k \colon F_k D' \to D\) induces
\begin{itemize}
\item a surjection \(H_k (F_k D') \to H_k(D)\),
\item an isomorphism \(H_j (F_k D') \to H_j(D)\), for all \(j < k\),
\end{itemize}
\item the composition \(i_k \circ f'_k\) coincides with the restriction
\(f|_{C_{\leq k}}\) of~\(f\) to~\(C_{\leq k}\); in particular,
\(\mathrm{im}\, (f_k) \subseteq (F_k D')_k\).
\end{itemize}

For \(k=-1\) we simply define \(F_{-1}D'\) to be the trivial chain
complex; together with the null homomorphisms \(f'_{-1}\) and
\(i_{-1}\), this choice satisfies the above conditions.

\medbreak

Assume now that, for some fixed \(k \geq -1\), we have constructed all
requisite data already.

We want to construct a suitable complex
\(F_{k+1} D'\) and a suitable chain map \(f'_{k+1} \colon C_{\leq k+1}
\to F_{k+1} D'\) such that \(i_{k+1} \circ f'_{k+1} = f|_{C_{\leq
k+1}}\). This will be done in three steps.

\medbreak

\emph{Step~1}: Let \(X = \mathrm{im}\, \big( f_{k+1} \colon C_{k+1} \to
D_{k+1} \big)\), and let \(E\) denote the chain complex
\begin{equation*}
  E \colon \quad
  \ldots \to 0 \to X \xrightarrow{\partial} (F_k D')_k
  \to (F_k D')_{k-1} \to (F_k D')_{k-2} \to \ldots
\end{equation*}
which "extends" the complex \(F_k D'\) into chain level~\(k+1\). The
map "\(\partial\)" is the restriction of the boundary map \(d^D_{k+1}
\colon D_{k+1} \to D_k\) of~\(D\). This makes sense since
\begin{equation*}
  d^D_{k+1} (X) = \mathrm{im}\, (d^D_{k+1} \circ f_{k+1})
  = \mathrm{im}\, (f_k \circ d^C_{k+1})
  \subseteq \mathrm{im}\, (f_k) \subseteq (F_k D')_k
\end{equation*}
so \(\partial\) is indeed well-defined. This complex~\(E\) is a
subcomplex of~\(D\), consists of finitely generated \(R\)-modules,
and comes with a map \(C_{\leq k+1} \to E\) which coincides with
\(f|_{C_{\leq k+1}}\) up to the inclusion map \(E \to D\). Also, the
inclusion induces a surjective map \(H_k (E) \to H_k(D)\), because in
the commutative diagram
\begin{equation*}
\begin{tikzcd}
  Z_k(E) \rar{=} \dar{} & Z_k(F_k D') \rar{=} & H_k(F_k D') \dar{} \\
  H_k(E) \ar[rr]{} && H_k(D) \\
\end{tikzcd}
\end{equation*}
the right-hand vertical arrow is surjective by induction hypothesis,
hence so is the composition of the left-hand and bottom arrows, hence
so is the bottom arrow by itself.

\smallbreak

\emph{Step~2}: Let \(Y\) be a finitely generated submodule of the module of
cycles \(Z_{k+1} D = \ker (d^D_{k+1} \colon D_{k+1} \to D_k)\) mapping
onto the homology module \(H_{k+1}(D) = Z_{k+1} D / B_{k+1} D\); such a
\(Y\) exists since \(H_{k+1}(D)\) is finitely generated. We let \(E'\)
denote the chain complex
\begin{equation*}
  E' \colon \quad
  \ldots \to 0 \to X+Y \xrightarrow{\partial'} (F_k D')_k
  \to (F_k D')_{k-1} \to (F_k D')_{k-2} \to \ldots
\end{equation*}
which "extends" the complex~\(E\). The map \(\partial'\) is defined to
be the map \(\partial\) from~\(E\) on~\(X\), and the zero homomorphism
on~\(Y\); in other words, \(\partial'\) is the restriction of
\(d^D_{k+1}\) to the submodule \(X+Y\) of~\(D_{k+1}\). Since \(B_k E' =
B_k E\) and \(Z_k E' = Z_k E\), the inclusion gives a surjection
\begin{equation}
\label{map:rho}
\rho \colon H_{k} E' = H_{k}  E \to H_{k} D \ .
\end{equation}
The composition of the map \(f'_k\) with the inclusions \(E \subseteq
E' \subseteq D\) coincides with~\(f|_{C_{\leq k+1}}\), and \(E'\)
consists of finitely generated \(R\)-modules. Since \(Y \subseteq
Z_{k+1} E'\), the inclusion \(E' \subseteq D\) induces a surjection
\(H_{k+1} E' = Z_{k+1} E' \to H_{k+1} D\). Note that
\(H_{k+1} E'\) is a submodule of~\(X+Y\), hence is finitely
generated.

\smallbreak

\emph{Step~3}: Let \(K = \ker(\rho)\) be the kernel of the
homomorphism~\(\rho\) of~(\ref{map:rho}), which is a finitely generated
\(R\)-module.  Since \(H_k E'\) is a quotient of~\(Z_k E'\) we can
find a finitely generated submodule \(\tilde Z\) of~\(Z_k E'\) mapping
surjectively to~\(K\) under the canonical quotient map \(Z_k E' \to
H_k E'\). Since \(E' \subseteq D\) we know \(\tilde Z \subseteq Z_k
D\); but \(\tilde Z\) maps onto~\(K\), hence maps to~0 in~\(H_k D\),
thus \(\tilde Z \subseteq B_k D\). Thus we can choose a finitely
generated submodule~\(Z\) of~\(D_{k+1}\) such that \(d^D_{k+1}(Z) =
\tilde Z\). We let \(F_{k+1} D'\) denote the chain complex
\begin{equation*}
  F_{k+1} D' \colon \quad
  \ldots \to 0 \to X+Y+Z \xrightarrow{\partial''} (F_k D')_k
  \to (F_k D')_{k-1} \to \ldots
\end{equation*}
which "extends" the complex~\(E'\). The map \(\partial''\) is the
restriction of~\(d^D_{k+1}\); this makes sense as \(d^D_{k+1}(Z) =
\tilde Z \subseteq Z_k E' \subseteq (F_k D')_k\). By construction,
\(F_{k+1} D'\) is a subcomplex of~\(D\) consisting of finitely
generated \(R\)-modules. It comes with a map \(f'_{k+1} \colon C_{\leq
k+1} \to F_{k+1} D'\) whose composition with the inclusion \(F_{k+1}D'
\subseteq D\) coincides with \(f|_{C_{\leq k+1}}\); indeed,
\(f'_{k+1}\) is just the co-restriction of \(f|_{C_{\leq
k+1}}\). Since \(Y \subseteq Z_{k+1} F_{k+1}D' = H_{k+1}(F_{k+1}D')\),
the inclusion gives a surjective map \(H_{k+1}(F_{k+1}D') \to
H_{k+1} D\).

It remains to check that the map induced on~\(H_k\) is an
isomorphism. To this end, recall first that the map~\(\rho\)
from~\ref{map:rho} above is surjective with kernel~\(K\). On the other hand,
the canonical map
\begin{equation*}
  \rho' \colon H_k E' \to H_k(F_{k+1} D')
\end{equation*}
is onto since \(Z_k E' = Z_k F_{k+1} D'\), and since
\begin{equation*}
  B_k E' = d^D_{k+1} (X+Y) \,\subseteq\,
  d^D_{k+1} (X+Y+Z) = B_k F_{k+1} D' \ .
\end{equation*}
The kernel of~\(\rho'\) is the image of~\(Z\) in~\(H_k E'\) which, by
choice of~\(Z\), equals the image of~\(\tilde Z\) in~\(H_k(E')\)
which, by choice of~\(\tilde Z\), equals~\(K\). Thus we have a
commutative diagram of exact sequences
\begin{equation*}
\begin{tikzcd}
  K \dar{=} \rar{} & H_k E' \rar{\rho'} \dar{=} & H_k (F_{k+1} D')
  \rar{} \dar{H_k(i_{k+1})} & 0 \\
  K \rar{} & H_k E' \rar{\rho} & H_k D \rar{} & 0 \\
\end{tikzcd}
\end{equation*}
which implies that the right-hand vertical map, which is induced by
the inclusion \(i_{k+1} \colon F_{k+1}D' \to D\), is an isomorphism.

\medbreak

To finish the proof, we define \(D' = \bigcup_{k \geq 0} F_k D'\), and
define \(f'\) and~\(i\) by saying that they agree with \(f'_{k}\)
and~\(i_k\), respectively, on~\(F_k D'\). The triple \((D', f', i)\)
satisfies all the requirements.
\end{proof}

\section{Computing the infinite Euler characteristic directly from the chain modules}
\label{sec:org158824d}
\label{sec:compute}

For \emph{bounded} chain complexes of finitely generated modules it is
well-known that the (usual) Euler characteristic, defined as the
alternating sum of the ranks of the homology modules, can equally be
computed as the alternating sum of the ranks of the chain modules. We
discuss the analogous statement for~\(\cH\) in this section.

\smallbreak

Let \(C\) be a pre-admissible chain complex of finitely generated
\(R\)-modules; that is, we insist that \(C\) has finitely generated
chain modules and not just finitely generated \emph{homology} modules. In
addition to the sequence~\(\mathbf{H}C\) we define the further
sequences
\begin{gather*}
  \mathbf{B}C = \big((-1)^{n-1} \mathrm{rank}\,B_{n-1}(C) \big)_{n
  \in \mathbb{N}} \\
  \intertext{and}
  \mathbf{R}C = \big( (-1)^{n-1} \mathrm{rank}\,C_{n-1} \big)_{n
  \in \mathbb{N}}
\end{gather*}
where \(B_k(C) = \mathrm{im}\, \big( \partial: C_{k+1} \to C_k \big)\)
is the module of \(k\)-boundaries of~\(C\).

\begin{thm}
Suppose that \(C\) is a positive chain complex of finitely generated
\(R\)-modules such that \(\mathbf{B}C = \mathrm{H} \hbox{-}
o(n)\). The sequence \(\mathbf{H}C\) is H-convergent (\emph{i.e.}, \(C\) is
pre-admissible) if and only if \(\mathbf{R}C\) is H-convergent, in
which case \(\cH(C) = \mathrm{\relax H}\hbox{-}\lim \mathbf{H}C =
\mathrm{\relax H}\hbox{-}\lim \mathbf{R}C\).
\end{thm}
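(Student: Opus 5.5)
Our plan is to relate the three sequences \(\mathbf{H}C\), \(\mathbf{B}C\) and \(\mathbf{R}C\) by a termwise identity, and then show that the correction term is H-null. The identity comes from rank additivity (Lemma~\ref{lem:rank_additive}). For each \(k \geq 0\) we have short exact sequences
\begin{equation*}
  0 \to Z_k C \to C_k \to B_{k-1} C \to 0
  \quad\text{and}\quad
  0 \to B_k C \to Z_k C \to H_k C \to 0 ,
\end{equation*}
with \(B_{-1}C = 0\). Together they give
\begin{equation*}
  \mathrm{rank}\, C_k = \mathrm{rank}\, H_k C + \mathrm{rank}\, B_k C + \mathrm{rank}\, B_{k-1} C .
\end{equation*}
All ranks are finite because the chain modules are finitely generated. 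Multiplying by \((-1)^k\) and reindexing with \(k = n-1\), the identity becomes the sequence equation
\begin{equation*}
  \mathbf{R}C = \mathbf{H}C + \mathbf{B}C - (\mathbf{B}C)[1] ,
\end{equation*}
where the sign flip on the shifted term comes from \((-1)^{n-1} = -(-1)^{n-2}\).

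Thus it suffices to prove that \(\mathbf{d} := \mathbf{B}C - (\mathbf{B}C)[1]\) is H-convergent with H-limit \(0\). Granted this, linearity of \(M^k\) shows that \(\mathbf{R}C\) is H-convergent if and only if \(\mathbf{H}C\) is, with the same H-limit.

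The key step is to apply the Cesàro matrix once to \(\mathbf{d}\). The sum telescopes: \(\sum_{j=1}^n d_j = (\mathbf{B}C)_n\), since the shifted sequence has \(0\) in position \(1\). Hence
\begin{equation*}
  (M\mathbf{d})_n = \frac{(\mathbf{B}C)_n}{n} , \qquad\text{that is,}\qquad M\mathbf{d} = \mathbf{B}C/n .
\end{equation*}
The hypothesis \(\mathbf{B}C = \mathrm{H}\hbox{-}o(n)\) says precisely that \(\mathbf{B}C/n\) is H-null, so \(M^k(\mathbf{B}C/n) \to 0\) for some \(k\). Therefore \(M^{k+1}\mathbf{d} = M^k(M\mathbf{d}) \to 0\), and \(\mathbf{d}\) is H-null.

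Finally, if \(\mathbf{H}C\) is \(M^{k_1}\)-convergent, we take \(m \geq \max(k_1, k+1)\). By the remark in \S\ref{sec:Hölder limits} that \(M^k\)-convergence implies \(M^m\)-convergence for \(m \geq k\) with the same limit, \(M^m\mathbf{R}C = M^m\mathbf{H}C + M^m\mathbf{d}\) converges to \(\mathrm{H}\hbox{-}\lim \mathbf{H}C\). The converse direction is symmetric, using \(\mathbf{H}C = \mathbf{R}C - \mathbf{d}\). We expect the only delicate point to be the bookkeeping of signs and indices in the shift, in particular checking the boundary term at \(n = 1\) so that the telescoping is exact. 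No appeal to Lemma~\ref{lem:shift_invariance} is needed, because the telescoping identity holds on the nose.
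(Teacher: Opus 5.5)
Your proof is correct and is essentially the paper's argument. The paper obtains \(M\mathbf{R}C = M\mathbf{H}C + \mathbf{B}C/n\) directly by computing the Euler characteristic of the truncation \(C_{<n}\), using \(H_{n-1}(C_{<n}) = Z_{n-1}C\); this is exactly your termwise identity after applying \(M\) and telescoping, and the paper then concludes from \(M^k\mathbf{R}C = M^k\mathbf{H}C + M^{k-1}(\mathbf{B}C/n)\) just as you do.
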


\begin{proof}
Fix \(n \in \mathbb{N}\) for the moment. We want to compare the
\(n\)th terms of the sequences \(M\mathbf{H}C\) and
\(M\mathbf{R}C\). To this end, we note that the (classical) Euler
characteristic of the truncated chain complex \(C_{<n}\), which is a
bounded complex, is a multiple of the \(n\)th term of the sequence \(M
\mathbf{H} C\):
\begin{equation*}
  \chi(C_{<n})
  = \sum_{k=0}^{n-1} (-1)^k \mathrm{rank}\, H_k(C_{<n})
  = \sum_{k=0}^{n-1} (-1)^k \mathrm{rank}\, C_k
  = n \cdot (M\mathbf{R}C)_n \ ,
\end{equation*}
Now \(H_k(C_{<n}) = H_k C\) for \(k<n-1\), while \(H_{n-1} (C_{<n}) =
\ker (C_{n-1} \to C_{n-2}) = Z_{n-1}C\) is the \((n-1)\)st cycle module
of~\(C\). The short exact sequence
\begin{equation*}
  0 \to B_{n-1} C \to Z_{n-1} C \to H_{n-1} C \to 0
\end{equation*}
shows that
\begin{equation*}
  \mathrm{rank}\, Z_{n-1}C = \mathrm{rank}\, H_{n-1} C +
  \mathrm{rank}\, B_{n-1} C \ .
\end{equation*}
Putting all these together, we get
\begin{align*}
  (M\mathbf{R}C)_n &= \frac1n \sum_{k=0}^{n-1} (-1)^k \mathrm{rank}\, H_k
  (C_{<n}) \\
  & = \frac1n \sum_{k=0}^{n-1} (-1)^k \mathrm{rank}\, H_k(C)
  + \frac{(-1)^{n-1}}{n} \mathrm{rank}\, B_{n-1}(C) \\
  & = (M\mathbf{H}C)_n + \frac1n (\mathbf{B}C)_n \ .
\end{align*}
Consequently, for \(k > 0\)
\begin{equation*}
  M^k \mathbf{R}C = M^k \mathbf{H}C + M^{k-1} (\mathbf{B}C/n) \ .
\end{equation*}
The hypotheses say that for \(k \gg 0\), the sequence \(M^{k-1}
(\mathbf{B}C/n)\) converges to~\(0\). This shows that \(M^k
\mathbf{R}C\) converges if and only if \(M^k \mathbf{H}C\) converges,
and that the limits agree in that case.
\end{proof}

\section*{Concluding remarks}
\label{sec:org0ef6892}
\label{sec:remarks}

The specifics of the Hölder summation method are crucial for the
present treatment. From the perspective of summability theory,
\(M^k\)-convergence is the same as convergence with respect to the
Cesàro summation method \((C,k)\), see \cite[p.448]{zbMATH02621104} or
\cite[Theorem 3.1.16]{zbMATH01544061}.  The method \((C,k)\) has the
advantage of not being defined iteratively, but it seems difficult to
adapt the current proof of additivity of~\(\cH\). Nevertheless, to
obtain larger classes of admissible chain complexes, the use of other
(stronger) summation methods can be considered. A variant using the
Abel summation method will be the topic of a forthcoming paper.

We have shown that the group~\(K^{\text{\upshape H}}_0(R)\) is
uncountable. It would be interesting to have a precise identification
of the group, to investigate possible ring structures it supports, and
to find natural topologies or even geometric structures
on~\(K^{\text{\upshape H}}_0(R)\).

\subsubsection*{ }
\label{sec:orgfbb036a}
\bibliographystyle{alpha}
\bibliography{/home/huette/SYNC-notes/Maths/Euler_characteristics/chi1}

\begin{thebibliography}{Gub04}

\bibitem[BL08]{zbMATH05229957}
Clemens Berger and Tom Leinster.
\newblock The {Euler} characteristic of a category as the sum of a divergent
  series.
\newblock {\em Homology Homotopy Appl.}, 10(1):41--51, 2008.

\bibitem[Boo00]{zbMATH01544061}
Johann Boos.
\newblock {\em Classical and modern methods in summability}.
\newblock Oxford Math. Monogr. New York, NY: Oxford University Press, 2000.

\bibitem[Die57]{zbMATH03127712}
P.~Dienes.
\newblock The {Taylor} series. {An} introduction to the theory of functions of
  a complex variable.
\newblock New {York}: {Dover} {Publications} {Inc}. {X}, 552 p. (1957)., 1957.

\bibitem[Gub04]{zbMATH02105767}
Joseph Gubeladze.
\newblock Toric varieties with huge {Grothendieck} group.
\newblock {\em Adv. Math.}, 186(1):117--124, 2004.

\bibitem[H{\"o}l82]{zbMATH02705122}
O.~H{\"o}lder.
\newblock {G}renzwerthe von {R}eihen an der {C}onvergenzgrenze.
\newblock {\em Math. Ann.}, 20:535--549, 1882.

\bibitem[Kui65]{zbMATH03211541}
N.~H. Kuiper.
\newblock The homotopy type of the unitary group of {Hilbert} space.
\newblock {\em Topology}, 3:19--30, 1965.

\bibitem[Min87]{zbMATH03998548}
J.~A. Mingo.
\newblock K-theory and multipliers of stable {{\({C}^ *\)}}-algebras.
\newblock {\em Trans. Am. Math. Soc.}, 299:397--411, 1987.

\bibitem[Sch13]{zbMATH02621104}
I.~Schur.
\newblock {\"U}ber die {\"a}quivalenz der \emph{Ces{\`a}ro}schen und
  \emph{H{\"o}lder}schen {Mittelwerte}.
\newblock {\em Math. Ann.}, 74:447--458, 1913.

\bibitem[Wal85]{zbMATH03927168}
Friedhelm Waldhausen.
\newblock Algebraic {K}-theory of spaces.
\newblock Algebraic and geometric topology, {Proc}. {Conf}., {New}
  {Brunswick}/{USA} 1983, {Lect}. {Notes} {Math}. 1126, 318-419 (1985)., 1985.

\bibitem[WO93]{zbMATH00412105}
Niels~Erik Wegge-Olsen.
\newblock {\em {{\(K\)}}-theory and {{\(C^*\)}}-algebras: a friendly approach}.
\newblock Oxford: Oxford University Press, 1993.

\bibitem[Zel51]{zbMATH03070559}
Karl Zeller.
\newblock Allgemeine {Eigenschaften} von {Limitierungsverfahren}.
\newblock {\em Math. Z.}, 53:463--487, 1951.

\end{thebibliography}
\end{document}